\newcommand{\CC}{{\mathcal C}}
\newcommand{\DD}{{\mathcal D}}
\newcommand{\FF}{{\mathcal F}}
\newcommand{\XX}{{\mathcal X}}
\newcommand{\C}{{\mathbb C}}
\newcommand{\N}{{\mathbb N}}
\newcommand{\R}{{\mathbb R}}
\newcommand{\supp}{{\operatorname{supp}}}
\newcommand{\diam}{{\operatorname{diam}}}
\newcommand{\dist}{{\operatorname{dist}}}
\newcommand{\Tr}{{\operatorname{t}}}
\newcommand{\Ran}{{\operatorname{rn}}}
\newcommand{\Ker}{{\operatorname{kr}}}
\newtheorem{teo}{Theorem}[section]
\newtheorem{lema}[teo]{Lemma}
\newtheorem{coro}[teo]{Corollary}
\newtheorem{propo}[teo]{Proposition}
\theoremstyle{remark} \newtheorem{remark}[teo]{Remark}}
\title[Shell interactions for Dirac operators]{Shell interactions for Dirac operators}
\author[N. Arrizabalaga, A. Mas, L. Vega]{Naiara Arrizabalaga, Albert Mas, Luis Vega}
\date{May, 2013}
\subjclass[2010]{Primary 81Q10, Secondary 35Q40.} 
\keywords{Dirac operator, self-adjoint extension, fundamental solution, shell interactions.}
\thanks{Arrizabalaga was supported in part by MTM2011-24054. Mas was partially supported by MTM2010-16232, 2009SGR-000420, and IT-305-07.  Vega was partially supported by MTM2011-24054, UFI11/52 and IT-305-07.}
\address{N. Arrizabalaga, A. Mas, and L. Vega.
Departamento de Matem\'aticas, Universidad del Pa\'is Vasco, 48080 Bilbao (Spain)} \email{naiara.arrizabalaga@ehu.es, amasblesa@gmail.com, luis.vega@ehu.es}
\begin{document}

\begin{abstract}
The self-adjointness of $H+V$ is studied, where $H=-i\alpha\cdot\nabla
+m\beta$ is the free Dirac operator in $\R^3$ and $V$ is a measure-valued potential. The potentials $V$ under consideration are given by
singular measures with respect to the Lebesgue measure, with special
attention to surface measures of bounded regular domains. The existence
of non-trivial eigenfunctions with zero eigenvalue naturally appears in our approach,
which is based on well known estimates for the trace operator defined
on classical Sobolev spaces and some algebraic identities of the Cauchy operator associated to $H$.\\

\noindent
R\'ESUM\'E. Nous \'etudions le caract\`ere auto-adjoint de $H+V$, o\`u  $H=-i\alpha\cdot\nabla
+m\beta$ est l'op\'erateur de Dirac libre dans $\R^3$ et $V$ est un potentiel \`a valeur mesure. Les 
potentiels $V$ consider\'es sont donn\'es par mesures singuli\`eres par rapport \`a la mesure de Lebesgue, 
avec attention particuli\`ere pour le cas des mesures de surface de domaines born\'es r\'eguliers. L'existence de fonctions propres non triviales \`a valeur propre nulle appara\^{i}t de fa\c con naturelle dans notre approche, qui est bas\'ee sur des estimations connues pour l'op\'erateur trace d\'efini dans les espaces de Sobolev classiques et quelques identit\'es alg\'ebriques de l'op\'erateur de Cauchy associ\'e \`a $H$. 

\end{abstract}
\maketitle

\section{Introduction}
In this article we investigate the self-adjointness in $L^2(\R^3)^4$
of the free Dirac operator
\begin{equation*}
H=-i\alpha\cdot\nabla+m\beta\quad(\text{for }m>0)
\end{equation*}
coupled with measure-valued potentials, where $\alpha= (\alpha_1,\alpha_2,\alpha_3)$ and $\alpha_j$ for $j=1,2,3$ and $\beta$ denote
the so-called Dirac matrices (see (\ref{s3eq1}) in Section \ref{s2}
for the details about $H$). Recall that $H$ acts on spinors $\varphi(x)=
\binom{\phi} {\chi}(x)$ with $x\in \R^3$ and $\phi(x),\chi(x)\in
\C^2$. Moreover, 
$H$ is invariant under translations and, for $m=0$, it is also
invariant under scaling because, if
$$\varphi_\lambda(x)=\lambda^{-1}\varphi(\lambda x)\quad\text{for } \lambda>0,
$$
then $H \varphi_\lambda(x)= H \varphi (\lambda x).$
We are interested on critical perturbations of $H$, i.e., those given by potentials $V(x)$ such that, when measured in an appropriate function space, the
rescaled potentials
\begin{equation}\label{Ieq2}
V_{\lambda}(x)=\lambda V(\lambda x)\quad\text{for }\lambda>0
\end{equation}
also belong to the same space and have the same size. We shall pay
special attention to potentials given by measures $\sigma$ such that 
\begin{equation}\label{Ieq3}
\sigma (B)\leq C\diam(B)^2
\end{equation}
for any ball $B\subset\R^3$ (in particular, $\sigma$ and the Lebesgue measure in $\R^3$ are mutually singular),
and more precisely to surface measures of bounded regular domains. Note that, for balls centered at the origin, (\ref{Ieq3}) is invariant under the scaling given by (\ref{Ieq2}) in the distributional sense.

The main question that we want to address is the following: {\em in which sense these critical
perturbations can be considered small with respect to the free Dirac
operator $H$?}. This can be seen as a very first step to understand
more complicated settings, like for example those where $V$
is given by a non-linear potential.
At this respect it is worth mentioning that, as far as we know, all the
available results concerning non-linear Dirac equations involve, in
one way or another, some kind of smallness either on the size of the
initial data or on the time of existence (see \cite{EV}, \cite{MNNO},
\cite{Cacc}).

The first kind of perturbation one could think about is the one given by
potentials $V$ that are hermitian and that grow like the
classical Coulomb potential, that is
$$|x||V(x)|\leq \nu\quad\text{for some }\nu\geq0.$$
For $\nu<1$, there exists a unique domain $D$ where $H+V$
is  selfadjoint and  such that $D$ is a subspace of the space of spinors with finite kinetic
and potential energy, i.e., 
$$D\subset \left\{\varphi\in L^2(\R^3)^4\,:\,\big(I_4-\Delta\big)^{1/4}(\varphi)\in
L^2(\R^3)^4\,\text{ and }\, \int |\varphi|^2\,\frac{dx}{|x|}<+\infty\right\},$$
where $I_4$ denotes the identity operator on $L^2(\R^3)^4$
(see \cite{KW}). It is well known
that, for $V(x)=\nu/|x|$ and $|\nu|>1$, the hamiltonian is not essentially self-adjoint (see \cite{Thaller}),
and it does not seem to exist a natural choice among all the possible
extensions. Concerning the critical case $\nu=\pm 1$, little
is known. For scalar potentials
$$V(x)=v(x)I_4\quad\text{with }v(x)\in\R,$$
partial results have been obtained in \cite{EL}.
The existence of a threshold at $\nu=1$ is a consequence of a sharp
inequality of Hardy type that involves $H$ instead of the usual
gradient. 
Note that $H$ does not leave invariant the set of radial spinors, hence this Hardy's inequality is not a straightforward extension of the
classical one.
Besides, recall that $H$ is not a semibounded operator. In fact, assume
that $V(x)=V(-x)$ and that $\varphi(x)=
\binom{\phi} {\chi}(x)$
is an eigenfunction with eigenvalue $\lambda$. Then
$\widetilde\varphi(x)=
\binom{\chi}{\phi} (-x)$
is an eigenfunction with eigenvalue $-\lambda$. This elemental
property plays a role in one of the main results in
this paper, namely Theorem \ref{t2}.

Motivated by the examples of potentials with Coulombic type
singularities, we want to investigate the case of potentials with
a singular support on a hypersurface $\Sigma\subset\R^3$; spheres and hyperplanes are fundamental
examples. One may
assume without loss of generality that the sphere is
$$S^2=\left\{x\in\R^3\,:\,|x|=1\right\}$$
and the hyperplane is
$$\R^2\times\{0\}=\left\{(x_1, x_2, x_3)\in\R^3\,:\,x_3=0\right\}.$$

The case of the sphere has been studied by different authors like, for example, \cite{Naiara}, \cite{Dittrich}, \cite{Dominguez}, and \cite{Shabani} (see also the references there in). The closest results to
ours are those obtained in \cite{Dittrich}, where a wide variety of hamiltonians
given by potentials $V$ supported on $S^2$ are considered. Besides, spectral
questions of these hamiltonians are studied.
One of the interesting features proved in \cite{Dittrich} is that it seems to
be no size condition at all on the potential $V$ which prevents
from self-adjointness. 

The
approach in \cite{Dittrich} heavily relies on the use of the spherical symmetry and, in particular, on the decomposition in spherical harmonics.
From our point of view, the use of spherical harmonics has the strong limitation that the domain of definition of the
hamiltonians is not explicit. This drawback does not exist in our
approach, which it is essentially based on the use of the trace
inequality for functions of the classical Sobolev space $W^{1,2}(\R^3)$ which will be introduced later on. As a consequence, for proving
self-adjointness, we do not make any particular use of any symmetry, and
our result holds for quite general $\Sigma.$

Regarding the case of the hyperplane $\{x_3=0\}$, Fourier analysis is available and provides a simpler approach. In fact, the domain
of definition of the hamiltonians under study is completely
explicit, as it will be seen. Moreover, it becomes evident the existence of some
critical values for some specific potentials. These critical values play a
fundamental role and, as far as we know, they have been completely
overlooked in previous works.

For the purpose of this introduction, let us focus on the case where
the potential is a $\delta$-shell on $\Sigma=\R^2\times\{0\}\subset\R^3$ (see Proposition \ref{p2} below for
the details). In order to define the hamiltonian
$$H+\lambda\delta_\Sigma\quad \text{with } \lambda\in\R, $$
it is natural to consider a subspace of
$$W^{1,2}(\R^3)^4+\left\{\phi * g\sigma\, :\,g\in L^2(\Sigma)^4\right\}\subset L^2(\R^3)^4$$
as the domain of definition. In that statement, $\phi$ denotes the fundamental solution of $H$ (see Section \ref{s2}) and $g\sigma$ is a singular measure with support on $\Sigma$ and given by an $L^2(\Sigma)^4$ density
$$\int_{\R^2}\big|g(x_1,x_2)\big|^2 \,dx_1 dx_2<+\infty.$$
It is not hard to prove that $\phi* g\sigma$ has a jump discontinuity
on the hyperplane $\Sigma$. Hence, following \cite{Dittrich}, we may define
$$\lambda\delta_{\Sigma}
(\phi\ast g\sigma)=\frac{\lambda}{2}\Big(C_+(g)+C_-(g)\Big),$$
where
$$C_{\pm}(g)(x_1,x_2)=\lim_{x_3\to 0^{\pm}}(\phi* g\sigma)(x_1,x_2,x_3).$$
It turns out that, for this type of hamiltonians, there is a big
difference whether $\lambda=\pm 2$ or not. 
Actually, the domain of definition of the hamiltonian  for $\lambda=
\pm 2$ is completely different from the case of $\lambda\neq\pm2$ (see Proposition \ref{p2} for the details), which is a consequence of (\ref{p2eq5}) below. Moreover,
if $\lambda=\pm2$ then there exist non-trivial eigenfunctions with
eigenvalue zero, something that does not hold if $\lambda\neq\pm2$. Let us mention that the values $\lambda=\pm 2$ appear independently of the hypersurface $\Sigma$, they actually come from the well known identity
\begin{equation}\label{Ieq1}
-4\big(C_{\sigma}(\alpha\cdot N)\big)^2=I_4
\end{equation}
proved in Lemma \ref{l5}, where $N$ denotes  the (exterior) unit
normal vector field to $\Sigma$ and $C_{\sigma}$ is the Cauchy operator associated to $H$ and $\Sigma$.

Nevertheless, the use of Fourier analysis in the case
$\Sigma=\{x_3=0\}$ hides the fundamental algebraic property that allows us to obtain such explicit results. In fact, the hyperplane has the
property that the anticommutator
\begin{equation*}
\left\{\alpha\cdot N,C_{\sigma}\right\}=(\alpha\cdot N)C_{\sigma}
+C_{\sigma}(\alpha\cdot N)
\end{equation*}
is identically zero. It is easy to see that $\left\{\alpha\cdot N,C_{\sigma}\right\}$
does not vanish in general (the case of the sphere is an example, see Proposition \ref{p2}). This feature of the anticommutator is equivalent
to a spectral property of the hamiltonian
$H+\lambda\delta_\Sigma$, namely, the existence of non-trivial
eigenfunctions with eigenvalue zero. We think that this is a relevant
connection and it can be seen as an extra bonus of our approach with
respect to those available in the literarure.
The situation is particularly simpler when $\Sigma$ is the
boundary of a regular bounded domain, because in this case $\left\{
\alpha\cdot N,C_{\sigma}\right\}$ is a compact operator and Fredholm
theory applies. In fact, the eigenvalues of this compact
operator can be written in terms of those $\lambda$'s for which a non-trivial eigenfunction either for $H+\lambda \delta_{\Sigma}$ or for $H-
\lambda \delta_{-\Sigma}$ exists, where 
$-{\Sigma}=\{x\in\R^3:\, -x\in\Sigma\}$. We should mention that this happens as long as $\lambda\neq \pm2$. In this article, the case $\lambda= \pm2$ is only considered for $\Sigma=\R^2\times\{0\}$, and it is unclear what can be expected for general surfaces (including the sphere). We think that this is a relevant open problem and we plan to work on it in the future.

Regarding the results of this article,  the ambient Hilbert space is
$L^2(\R^3)^4$ with respect to the Lebesgue measure, and $H$ is defined
in the sense of distributions. For suitable singular measures $\sigma$
and $L^2(\sigma)^4$-valued potentials $V$ we find domains $D\subset
L^2(\R^3)^4$ in which $H+V$ is an unbounded self-adjoint operator. As
in the case of the hyperplane, our approach is based on the fact that,
if $$\varphi\in D\subset L^2(\R^3)^4\quad\text{and}\quad(H+V)(\varphi)
\in L^2(\R^3)^4,$$ then $H(\varphi)$ has to be the sum in the sense of
distributions of a function $G\in L^2(\R^3)^4$ and another in $g\in
L^2(\sigma)^4$, because of $V$. By the same reason, $V(\varphi)$
should coincide with $-g$. Therefore, $\varphi$ should be the
convolution $\phi*(G+g)$. To guarantee that $H+V$ is symmetric on $D
$, we impose some relations between $G$ and $g$, but these relations
must not be too strong because, for obtaining self-adjointness, $D$
can not be too small.

In this direction, our first main result is Theorem \ref{t1}, which
deals not only with $H$ but with general symmetric differential
operators $L$ on $L^2(\R^n)^b$ with constant coeffitients ($n\geq2$, $b
\geq1$). With the aid of bounded self-adjoint operators $
\Lambda:L^2(\sigma)^b\to L^2(\sigma)^b$, in Theorem \ref{t1} we
construct domains $D$ where $L+V$ is self-adjoint (or essentially self-adjoint), basically by relating $G$ and $g$ through $\Lambda$ for all $
\varphi=\phi*(G+g)\in D$. Depending on the relations that we impose,
we have to require extra properties on $\Lambda$ to ensure self-adjointness.
In this theorem, $V$ is taken so that $L+V:D\to L^2(\R^3)^b$. Indeed,
Theorem \ref{t1} can be considered as a method for constructing self-adjoint extensions of the differential operator $L$ initially defined
on $\CC^\infty_c(\supp(\sigma)^c)$, due to the fact that $V$ lives in $
\supp(\sigma)$ and thus it vanishes on the latter function space.

Our second main result in this article is Theorem \ref{t2}, where the
case of the Dirac operator $H$ coupled with specific potentials $V$
living on the boundary of a regular bounded domain $\Omega$ is
treated. In this setting, the functions $\varphi=\phi*(G+g)$ have
boundary values $\varphi_\pm$ when we approach to $\partial\Omega=
\Sigma$ from inside/outside $\Omega$. The potentials under
consideration in Theorem \ref{t2} are
$$V_\lambda(\varphi)= \lambda\delta_\Sigma(\varphi)=\frac{\lambda}
{2}\,(\varphi_++\varphi_-)$$ for $\lambda\in\R$, that is to say the 
$\delta$-shell potentials that we have mentioned above in the case of the sphere and the hyperplane.
The existence of the critical values $\lambda=\pm2$ and some $\lambda_j$'s for which $\Ker(H+V)\neq{0}$ are described in the statement of the theorem. As we already said, this latter
property is a consequence of the fact that $\left\{\alpha\cdot
N,C_{\sigma}\right\}$ is not trivial and compact. 

In Theorem \ref{t4} we consider potentials defined in terms of some commutativity property. In particular, the theorem applies to some particular magnetic potentials (see $(i)$ and $(ii)$ in Subsection \ref{ss6}). Theorem \ref{t3} is devoted to general potentials satisfying a smallness condition (see $(iii)$, $(iv)$, and $(v)$ in Subsection \ref{ss6} for some examples).

Concerning the structure of the article, Section \ref{s1} is devoted
to construct the aforementioned self-adjoint extensions of symmetric
differential operators, which are interpreted as a coupling with
suitable measure-valued potentials, by using a fundamental solution
and singular measures. Subsections \ref{ss4}, \ref{ss1}, and \ref{ss3}
contain the preliminaries, and the main result of the section is in
Subsection \ref{ss2}, namely Theorem \ref{t1}.

In Section \ref{s2} we focus our attention to the Dirac operator. The
first part of the section contains some basics on its fundamental
solution as well as a direct application of Theorem \ref{t1} to the
Dirac operator coupled with quite general measure-valued potentials.
Subsection \ref{ss5} is devoted to the study of potentials living on
the boundary of a Lipschitz domain. We first provide
some preliminaries about boundary values (such as Plemelj-Sokhotski
jump formulae) and a proof of (\ref{Ieq1}), as well as some other properties of $C_{\sigma}$. Afterwards, we show
the three main results of the subsection, namely Theorems \ref{t2},
\ref{t4}, and \ref{t3} above-mentioned. Proposition \ref{p2} contains
some particularities of Theorem \ref{t2} for the case of the plane and
the sphere. Finally, Subsection \ref{ss6} provides some examples of
potentials which fit in Theorems \ref{t4} and \ref{t3}.

\section{Self-adjoint extensions of symmetric differential operators}\label{s1}
\subsection{Basic definitions and notation}\label{ss4}
For the sequel, $C>0$ denotes a constant which may change its value at different occurrences. 
Throughout this section $n,b>0$ are fixed integers and $d$ is real number such that $0<d<n$, unless we specify something different at each particular situation. 

Given a positive Borel measure $\nu$ in $\R^n$, set
$$L^2(\nu)^b=\left\{f:\R^n\to\C^b\text{ $\nu$-measurable}:\, 
\|f\|^2_{L^2(\nu)^b}:=\int|f|^2\,d\nu<\infty\right\},$$
and denote by $\langle\cdot,\cdot\rangle_{\nu}$ and $\|\cdot\|_\nu$ the standard scalar product and norm in $L^2(\nu)^b$, i.e., 
$\langle f,g\rangle_\nu=\int f\cdot\overline g\,d\nu$ and 
$\|f\|^2_\nu=\int|f|^2\,d\nu$ for $f,g\in L^2(\nu)^b$. Set
$\DD=\CC^\infty_c(\R^n)^b$ ($\C^b$-valued functions defined in $\R^n$ and which are $\CC^\infty$ with compact support), and
$\DD^*$ denotes the space of distributions in $\R^n$ with respect to space of test functions $\DD$. We write $I_b$ or $1$ interchangeably to denote the identity operator on $L^2(\nu)^b$.

Let $\mu$ denote the Lebesgue measure in $\R^n$. Given a Borel measure $\sigma$ in $\R^n$, we say that $\sigma$ is $d$-dimensional if $\sigma(B(x,r))\leq Cr^d$ for all $x\in\R^n$, $r>0$. We also say that $\sigma$ is $d$-dimensional Ahlfors-David regular, or simply $d$-AD regular, if $C^{-1}r^d\leq\sigma(B(x,r))\leq Cr^d$ for all $x\in\supp(\sigma)$ and $0<r<\diam(\supp(\sigma))$.

Finally, we introduce the auxiliary space $$\XX=\left\{G\mu+g\sigma:\,G\in L^2(\mu)^b,\, g\in L^2(\sigma)^b\right\}\subset\DD^*.$$

\subsection{Symmetric differential operators and fundamental solutions}\label{ss1}
Let $L:\DD\to\DD$ be a differential operator with constant coefficients and symmetric with respect to $\langle\cdot,\cdot\rangle_{\mu}$, that is
$\langle L(f),g\rangle_{\mu}
=\langle f,L(g)\rangle_{\mu}$ for all $f,g\in\DD.$
By duality, $L$ is also defined in $\DD^*$, thus we also have that $L:\DD^*\to\DD^*$.

Let $\phi=(\phi_{j,k})_{1\leq j,k\leq b}$ be a (matrix-valued) fundamental solution of $L$, so $\phi*L(f)=f$ for all $f\in\DD$. As usual, we denote by $\phi^t$ the transpose of $\phi$ and by $\overline{\phi}$ the complex conjuate of $\phi$, that is, $(\phi^t)_{\,j,k}=\phi_{k,j}$ and $(\overline{\phi})_{j,k}=\overline{\phi_{j,k}}$ for all $1\leq j,k\leq b$. For the sequel, we assume that $\phi$ satisfies the following conditions:
\begin{itemize}
\item[$(i)$] $\phi_{j,k}\in\CC^\infty(\R^n\setminus\{0\})$ for all $1\leq j,k\leq b$,
\item[$(ii)$] $\phi(x-y)=\overline{\phi^t}(y-x)$ for all $x,y\in\R^n$ such that $x\neq y$,
\item[$(iii)$] there exist $\gamma,\delta>0$ and $0<s<n$ such that
\begin{itemize}
\item[$(a)$] $\sup_{1\leq j,k\leq b}|\phi_{j,k}(x)|\leq C|x|^{-n+s}$ for all $|x|<\delta$,
\item[$(b)$] $\sup_{1\leq j,k\leq b}|\phi_{j,k}(x)|\leq Ce^{-\gamma|x|}$ for all $|x|>1/\delta$,
\item[$(c)$] $\sup_{1\leq j,k\leq b}\,
\sup_{\xi\in\R^n}(1+|\xi|^2)^{s/2}|\FF(\phi_{j,k})(\xi)|<\infty.$
\end{itemize}
\end{itemize}
We should mention that $s$ corresponds to the order of $L$. It is an exercise to show that $(b)+(c)$ imply $(a)$, but we state $(a)$ separately because we are going to use it explicitely in the sequel.
Furthermore, $(ii)$ is a consequence of the fact that $L$ is symmetric, and one formally has
$$\langle f,g\rangle_{\mu}
=\langle \phi*L(f),g\rangle_{\mu}
=\langle L(f),\phi*g\rangle_{\mu}
=\langle f,L(\phi*g)\rangle_{\mu}$$
for all $f,g\in\DD,$
thus $\phi*L(f)=L(\phi*f)=f$ for all $f\in\DD$.

Given a positive Borel measure $\nu$ in $\R^n$, $f\in L^2(\nu)^b$, and $x\in\R^n$, we set $$(\phi*f\nu)(x)=\int\phi(x-y)f(y)\,d\nu(y),$$ whenever the integral makes sense. Observe that $(\phi*f\nu)(x)$ is a vector of $b$ components.

\subsection{Preliminary results}\label{ss3}
This section is devoted to prove auxiliary lemmata necessary to state and prove the main result of Section \ref{ss2}.  

\begin{lema}\label{l1}
Let $\nu$ be a $d$-dimensional measure in $\R^n$ with $0<d\leq n$. 
If $2s>n-d$, then $\|\phi*g\nu\|_{\mu}\leq C\|g\|_{\nu}$ for all $g\in L^2(\nu)^b$.
\end{lema}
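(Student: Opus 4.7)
The plan is to transfer the estimate to the Fourier side via Plancherel and then apply Schur's test against a Bessel-type kernel. Write $Tg=\phi\ast g\nu$; we want $T:L^2(\nu)^b\to L^2(\mu)^b$ bounded.

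First, by Plancherel, $\|Tg\|_\mu^2=\int_{\R^n}|\hat\phi(\xi)\,\widehat{g\nu}(\xi)|^2\,d\xi$. The entry-wise bound in $(iii)(c)$ yields the corresponding bound on the matrix operator norm $|\hat\phi(\xi)|\le C(1+|\xi|^2)^{-s/2}$, so
\[
\|\phi\ast g\nu\|_\mu^2\le C\int_{\R^n}(1+|\xi|^2)^{-s}\,|\widehat{g\nu}(\xi)|^2\,d\xi.
\]

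Second, $(1+|\xi|^2)^{-s}$ is (up to a constant) the Fourier transform of the Bessel kernel $G_{2s}$, which is a positive function satisfying $G_{2s}(w)\le C|w|^{-n+2s}$ for $|w|\le 1$ (when $2s<n$; trivially bounded if $2s\ge n$) and $G_{2s}(w)\le Ce^{-c|w|}$ for $|w|\ge 1$. Using Plancherel in reverse,
\[
\int(1+|\xi|^2)^{-s}|\widehat{g\nu}(\xi)|^2\,d\xi=C\iint G_{2s}(x-y)\,g(x)\cdot\overline{g(y)}\,d\nu(x)\,d\nu(y).
\]
This is $C\langle Sg,g\rangle_\nu$ for the operator $S$ on $L^2(\nu)^b$ with kernel $G_{2s}(x-y)I_b$, so it suffices to bound $\|S\|$. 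Since the kernel is symmetric, Schur's test reduces the task to showing
\[
\sup_{x\in\R^n}\int G_{2s}(x-y)\,d\nu(y)\le C.
\]

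Third, split the integral at $|x-y|=1$. For the near part I would decompose dyadically,
\[
\int_{|x-y|\le 1}|x-y|^{-n+2s}\,d\nu(y)\le C\sum_{k\ge 0}2^{k(n-2s)}\nu\bigl(B(x,2^{-k})\bigr)\le C\sum_{k\ge 0}2^{k(n-2s-d)},
\]
which converges \emph{precisely} because $2s>n-d$; this is where the hypothesis is used. For the far part, the exponential decay of $G_{2s}$ absorbs the polynomial $d$-growth: $\sum_{k\ge 0}e^{-c2^k}\nu(B(x,2^{k+1}))\le C\sum_k e^{-c2^k}2^{(k+1)d}<\infty$. The case $2s\ge n$ is easier since $G_{2s}$ is locally bounded.

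The main delicate point is the identification of the weight $(1+|\xi|^2)^{-s}$ with the Bessel kernel and the use of its classical pointwise bounds; the matrix structure of $\phi$ adds only notational overhead, and the $d$-dimensional growth of $\nu$ enters exactly at the dyadic step, making the threshold $2s>n-d$ sharp within this argument. Note that a direct Schur test on $\phi$ itself only gives the weaker threshold $s>n-d$; it is the gain from convolving $\phi$ with itself (equivalently, working with $|\hat\phi|^2$) that brings the exponent down to $2s$.
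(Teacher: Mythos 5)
Your argument is correct in substance, but it takes a genuinely different route from the paper. The paper never leaves physical space: it sets $K(x)=\sup_{j,k}|\phi_{j,k}(x)|$, applies Cauchy--Schwarz with the kernel split into the two powers $K^{(d-\epsilon)/(n-s)}$ and $K^{2-(d-\epsilon)/(n-s)}$ (a weighted Schur test adapted to the two different measures $\nu$ and $\mu$), and then uses only the pointwise bounds $(iii)(a)$ and $(iii)(b)$ together with dyadic integration: the first factor is controlled by the $d$-dimensionality of $\nu$, the second, after integrating in $x$ against $\mu$, by the $n$-dimensionality of Lebesgue measure. This is exactly the device that upgrades the naive Schur threshold $s>n-d$ to the sharp $2s>n-d$, playing the role that squaring $\wih\phi$ plays in your version. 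Your proof instead uses hypothesis $(iii)(c)$, identifies $\|\phi\ast g\nu\|_\mu^2$ with a Bessel energy $\iint G_{2s}(x-y)\,g(x)\cdot\overline{g(y)}\,d\nu(x)\,d\nu(y)$, and runs Schur on $G_{2s}$ over $L^2(\nu)$ alone; this is conceptually cleaner, makes the threshold $2s>n-d$ transparent as an energy/capacity condition, and exhibits the lemma as dual to the trace bound of Corollary \ref{c2}. What it costs is a layer of technicalities that you gloss over: for a general (possibly infinite) $d$-dimensional $\nu$ and $g\in L^2(\nu)^b$, the measure $g\nu$ need not be finite, $\FF(g\nu)$ is a priori only a tempered distribution, and Plancherel cannot be applied to $\phi\ast g\nu$ before one knows it lies in $L^2(\mu)^b$ --- which is the conclusion. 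One fixes this by first taking $g$ bounded with compact support (so $g\nu$ is a finite measure, the energy identity is classical, and both sides are finite) and then passing to the limit, but this step should be said. The paper's physical-space argument avoids the issue entirely, at the price of being less illuminating about where $2s>n-d$ comes from.
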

\begin{proof}
Set $K(x)=\sup_{1\leq j,k\leq b}|\phi_{j,k}(x)|$ for $x\in\R^n\setminus\{0\}$. Let $\epsilon$ be such that $$\max\{0,d-2n+2s\}<\epsilon<\min\{d,d-n+2s\}.$$ Then, by Cauchy-Schwarz inequality,  
\begin{equation}\label{l1eq1}
\begin{split}
|(\phi*g\nu)(x)|^2&\leq C\bigg(\int K(x-y)|g(y)|\,d\nu(y)\bigg)^2\\
&\leq C\bigg(\int K(x-y)^{\frac{d-\epsilon}{n-s}}
\,d\nu(y)\bigg)\bigg(\int K(x-z)^{2-\frac{d-\epsilon}{n-s}}|g(z)|^2\,d\nu(z)\bigg).
\end{split}
\end{equation}
By $(iii)(a)$ and $(iii)(b)$ in Section \ref{ss1}, $K(x-y)^{(d-\epsilon)/(n-s)}\leq C|x-y|^{-d+\epsilon}$ for $|x|<\delta$ and 
$K(x-y)^{(d-\epsilon)/(n-s)}\leq Ce^{-\gamma(d-\epsilon)(n-s)^{-1}|x-y|}$ for $|x|>1/\delta$ (notice that $\gamma(d-\epsilon)/(n-s)>0$ since $n>s$ by $(iii)$ in Section \ref{ss1}). Hence, using that $\sigma$ is a $d$-dimensional measure in $\R^n$ and integration in dyadic annuli, we easily deduce that 
\begin{equation}\label{l1eq2}
\sup_{x\in\R^n}\int K(x-y)^{\frac{d-\epsilon}{n-s}}\,d\nu(y)<\infty.
\end{equation} 
Similarly, by $(iii)(a)$ and $(iii)(b)$ in Section \ref{ss1},
$K(x-y)^{2-(d-\epsilon)/(n-s)}\leq C|x-y|^{-2n+2s+d-\epsilon}$ for $|x|<\delta$ (recall that $2n-2s-d+\epsilon<n$) and $K(x-y)^{2-(d-\epsilon)/(n-s)}\leq Ce^{-\gamma(2-(d-\epsilon)/(n-s))|x-y|}$ for $|x|>1/\delta$ (notice that $\gamma(2-(d-\epsilon)/(n-s))>0$). Since $\mu$ is $n$-dimensional, we have
\begin{equation}\label{l1eq3} 
\sup_{z\in\R^n}\int K(x-z)^{2-\frac{d-\epsilon}{n-s}}\,d\mu(x)<\infty.
\end{equation} 
Therefore, combining (\ref{l1eq1}), (\ref{l1eq2}), Fubini's theorem, and (\ref{l1eq3}), we conclude that
\begin{equation*}
\begin{split}
\|\phi*g\nu\|^2_{\mu}&=\int|(\phi*g\nu)(x)|^2\,d\mu(x)\leq C\iint K(x-z)^{2-\frac{d-\epsilon}{n-s}}|g(z)|^2\,d\nu(z)\,d\mu(x)\\
&\leq C\int|g(z)|^2\,d\nu(z)=C\|g\|^2_{\nu}.
\end{split}
\end{equation*}
and the lemma is proved.
\end{proof}

\begin{remark}
The assumption $(iii)(b)$ in Section \ref{ss1} can be easily relaxed for the purposes of Lemma \ref{l1}, but we will not go further in this direction.
\end{remark}

\begin{coro}\label{c1}
Let $\sigma$ be a $d$-dimensional measure in $\R^n$ with $0<d\leq n$, and assume $2s>n-d$. For $G\mu+g\sigma\in\XX$, set $$\Phi(G+g)=
\phi*G\mu+\phi*g\sigma.$$
Then 
$\|\Phi(G+g)\|_\mu\leq C(\|G\|_\mu+\|g\|_\sigma)$ for all $G\mu+g\sigma\in\XX$.
\end{coro}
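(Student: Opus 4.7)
The plan is to reduce the corollary directly to Lemma \ref{l1} applied twice, once with the Lebesgue measure $\mu$ and once with the singular measure $\sigma$. The starting move is the triangle inequality in $L^2(\mu)^b$:
\begin{equation*}
\|\Phi(G+g)\|_\mu \;\leq\; \|\phi*G\mu\|_\mu + \|\phi*g\sigma\|_\mu,
\end{equation*}
so it suffices to bound each piece by the appropriate norm.

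For the second summand, I would invoke Lemma \ref{l1} with $\nu=\sigma$. Since $\sigma$ is $d$-dimensional and the standing assumption of the corollary is exactly $2s>n-d$, the hypothesis of the lemma is met and we get $\|\phi*g\sigma\|_\mu \leq C\|g\|_\sigma$. For the first summand, I would apply Lemma \ref{l1} with $\nu=\mu$; here one simply observes that the Lebesgue measure is trivially $n$-dimensional (so $d=n$ in the role of the lemma), which reduces the dimensional condition to $2s>0$, and this is guaranteed by $(iii)$ in Subsection \ref{ss1} since $s>0$. Hence $\|\phi*G\mu\|_\mu \leq C\|G\|_\mu$.

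Combining the two estimates and absorbing constants gives the desired bound $\|\Phi(G+g)\|_\mu \leq C(\|G\|_\mu + \|g\|_\sigma)$.

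There is really no obstacle to speak of here; the only mild point of care is to confirm that Lemma \ref{l1} applies in the degenerate case $d=n$, i.e., when $\nu$ is the Lebesgue measure itself. Inspecting the proof of Lemma \ref{l1}, one sees that the argument only uses the growth $\nu(B(x,r))\leq Cr^d$ together with $n$-dimensionality of $\mu$, both of which hold simultaneously when $\nu=\mu$, so no separate treatment is needed.
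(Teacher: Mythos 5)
Your proof is correct and is exactly the paper's argument: the paper's proof of this corollary is the one-line "Apply Lemma \ref{l1} to $\nu=\sigma$ and to $\nu=\mu$ separately," and you have simply spelled out the triangle inequality and verified the hypotheses (including the admissibility of $d=n$ for $\nu=\mu$, which the lemma's statement $0<d\leq n$ already covers).
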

\begin{proof}
Apply Lemma \ref{l1} to $\nu=\sigma$ and to $\nu=\mu$ separately.
\end{proof}

\begin{lema}\label{l3}
Let $\sigma$ be a $d$-dimensional measure in $\R^n$ with $0<d\leq n$,
and assume $2s>n-d$.
For every $G\mu+g\sigma\in\XX$, $L(\Phi(G+g))=G\mu+g\sigma$ in the sense of distributions.
\end{lema}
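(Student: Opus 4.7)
My plan is to test $L(\Phi(G+g))$ against an arbitrary test function $f\in\DD$ and verify the claimed equality by shifting the operator $L$ onto $f$ via the symmetry of $L$, then moving the convolution from $\Phi$ onto $L(f)$ via property $(ii)$, and finally invoking $\phi*L(f)=f$.

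More precisely, I would first note that by Corollary \ref{c1}, $\Phi(G+g)\in L^2(\mu)^b$, hence it defines a bona fide distribution and $L(\Phi(G+g))\in\DD^*$ is well-defined. Fix $f\in\DD$. Since $L$ has constant coefficients and is formally symmetric with respect to $\langle\cdot,\cdot\rangle_\mu$, the distributional action satisfies
\[
\langle L(\Phi(G+g)),f\rangle = \langle \Phi(G+g),L(f)\rangle_\mu.
\]
Split this into the two pieces corresponding to $\phi*G\mu$ and $\phi*g\sigma$.

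For the first piece, I would write out the scalar product componentwise:
\[
\langle \phi*G\mu,L(f)\rangle_\mu
=\sum_{j,k}\int\!\!\int \phi_{j,k}(x-y)\,G_k(y)\,\overline{L(f)_j(x)}\,d\mu(y)\,d\mu(x),
\]
then justify Fubini using that $L(f)$ is compactly supported and $|\phi_{j,k}|$ is locally integrable (by $(iii)(a)$ with $s<n$) and exponentially decaying (by $(iii)(b)$), so the total integrand is absolutely integrable. Property $(ii)$, i.e.\ $\phi_{j,k}(x-y)=\overline{\phi_{k,j}(y-x)}$, lets me rewrite
\[
\int \phi_{j,k}(x-y)\,\overline{L(f)_j(x)}\,d\mu(x)
=\overline{\int \phi_{k,j}(y-x)\,L(f)_j(x)\,d\mu(x)},
\]
so summing over $j$ produces $\overline{(\phi*L(f))_k(y)}$. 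Applying the fundamental-solution identity $\phi*L(f)=f$ from Subsection \ref{ss1} yields
\[
\langle \phi*G\mu,L(f)\rangle_\mu = \sum_k\int G_k(y)\,\overline{f_k(y)}\,d\mu(y)=\langle G,f\rangle_\mu,
\]
which is precisely the distributional pairing of $G\mu$ with $f$. The argument for $\phi*g\sigma$ is identical, with $\mu$ replaced by $\sigma$ in the inner integration; Fubini is again admissible because $\phi_{j,k}(x-y)\,g_k(y)\,L(f)_j(x)$ is absolutely integrable on $\R^n\times\supp(\sigma)$ by the same local $L^1$/exponential decay bound together with the $d$-dimensionality of $\sigma$ on the support of the compactly supported $L(f)$. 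This yields $\langle \phi*g\sigma,L(f)\rangle_\mu=\langle g,f\rangle_\sigma$, which is the pairing of $g\sigma$ with $f$.

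Adding the two pieces gives $\langle L(\Phi(G+g)),f\rangle=\langle G\mu+g\sigma,f\rangle$ for every $f\in\DD$, which is the desired distributional identity. The main technical point is the Fubini/conjugation bookkeeping in step two; it is not deep, but one must be careful that the integrability estimates hold uniformly in the support of $L(f)$ and that $(ii)$ is applied to the matrix $\phi$ rather than to its entries separately, so the transpose in $\phi*L(f)$ emerges correctly from the sum over $j$.
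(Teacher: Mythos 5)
Your proof is correct and follows exactly the route the paper indicates for Lemma \ref{l3}: pair against a test function, use the symmetry of $L$ to move it onto $f$, apply property $(ii)$ of $\phi$ together with Fubini, and conclude with $\phi*L(f)=f$. The paper merely sketches this argument in one line, and your write-up supplies the same steps with the Fubini and conjugation details made explicit.
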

\begin{proof}
Recall that $L(\phi*f)=\phi*(Lf)$ for all $f\in\DD$. The lemma follows easily by the fact that $L$ is symmetric and that $\phi$ satisfies $(ii)$ in Section \ref{ss1}.
\end{proof}

\begin{coro}\label{c3}
Let $\sigma$ be a $d$-dimensional measure in $\R^n$ with $0<d<n$,
and assume $2s>n-d$.
Given $G\mu+g\sigma\in\XX$ and $\varphi=\Phi(G+g)$, set  
\begin{equation*}
V(\varphi)= -g\sigma\quad\text{and}\quad
L_V(\varphi)=L(\varphi)+V(\varphi).
\end{equation*}
Then $V$ is well defined. Moreover,  $L_V(\varphi)=G\mu$ in the sense of distributions. For simplicity of notation, we write $L_V(\varphi)=G\in L^2(\mu)^b$.
\end{coro}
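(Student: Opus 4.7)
The plan is to first verify that the definition of $V$ does not depend on the particular representation $\varphi=\Phi(G+g)$, and then obtain the formula for $L_V(\varphi)$ as an immediate consequence of Lemma \ref{l3}.

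For well-definedness of $V$, suppose $\varphi = \Phi(G_1 + g_1) = \Phi(G_2 + g_2)$ for two pairs $(G_i, g_i)$ with $G_i\mu + g_i\sigma \in \XX$. Applying Lemma \ref{l3} to both representations gives
\begin{equation*}
G_1\mu + g_1\sigma = L(\varphi) = G_2\mu + g_2\sigma
\end{equation*}
in the sense of distributions. Since $G_i \in L^2(\mu)^b$ is locally integrable and $\sigma$ is $d$-dimensional (so locally finite, giving $g_i \sigma$ locally finite via Cauchy--Schwarz), both sides are locally finite complex Borel (vector) measures, so the identity holds as measures. Rewriting, $(G_1-G_2)\mu = (g_2-g_1)\sigma$.

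The key point is that $\sigma \perp \mu$, which follows from $d<n$: for every $x \in \R^n$,
\begin{equation*}
\limsup_{r \to 0^+} \frac{\sigma(B(x,r))}{r^n} \leq \limsup_{r\to 0^+} C r^{d-n} = 0,
\end{equation*}
while $\lim_{r\to 0^+} \mu(B(x,r))/r^n > 0$ for $\mu$-a.e.\ $x$, so the Lebesgue decomposition of $\sigma$ with respect to $\mu$ has no absolutely continuous part. Thus the left-hand side of $(G_1-G_2)\mu = (g_2-g_1)\sigma$ is $\mu$-absolutely continuous while the right-hand side is $\mu$-singular. By uniqueness of the Lebesgue decomposition, both sides vanish, so $G_1 = G_2$ in $L^2(\mu)^b$ and $g_1 = g_2$ in $L^2(\sigma)^b$. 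This shows that the assignment $\varphi \mapsto -g\sigma$ only depends on $\varphi$, proving that $V$ is well defined.

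Once $V$ is well defined, Lemma \ref{l3} yields $L(\varphi) = G\mu + g\sigma$, and by construction $V(\varphi) = -g\sigma$, so
\begin{equation*}
L_V(\varphi) = L(\varphi) + V(\varphi) = G\mu + g\sigma - g\sigma = G\mu
\end{equation*}
as distributions, which is the claimed identity; identifying the measure $G\mu$ with its density $G \in L^2(\mu)^b$ gives the shorthand $L_V(\varphi) = G$. The main obstacle is the well-definedness step, specifically justifying that one can separate the $\mu$-absolutely continuous and $\mu$-singular parts of the distributional identity; the hypothesis $d<n$ is used precisely here, and it is also where the assumption $0<d<n$ (strict) enters the corollary.
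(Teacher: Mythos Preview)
Your overall strategy is exactly the paper's: apply Lemma~\ref{l3} to two representations, invoke mutual singularity of $\mu$ and $\sigma$ to split the distributional identity into $G_1=G_2$ and $g_1=g_2$, then read off $L_V(\varphi)=G\mu$. The paper simply asserts ``Since $d<n$, $\mu$ and $\sigma$ are mutually singular'' without argument; you attempt to justify it, and that is where there is a genuine error.

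Your density computation goes the wrong way. Since $d<n$, the exponent $d-n$ is \emph{negative}, so $r^{d-n}\to+\infty$ as $r\to0^+$, not $0$. The upper growth bound $\sigma(B(x,r))\leq Cr^d$ therefore gives no useful control on $\sigma(B(x,r))/r^n$ near $r=0$. Worse, with only the paper's definition of ``$d$-dimensional'' (an upper bound on ball masses), mutual singularity is actually false in general: take $\sigma=\mu|_{B(0,1)}$, which satisfies $\sigma(B(x,r))\leq c_n r^n\leq c_n r^d$ for $r\leq1$ and $\sigma(B(x,r))\leq c_n\leq c_n r^d$ for $r>1$, yet $\sigma\ll\mu$. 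So neither your argument nor the paper's bare assertion is valid at the stated level of generality.

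The fix is to use the lower regularity that is in force everywhere the corollary is actually applied (Proposition~\ref{p1}, Section~\ref{ss2}): if $\sigma$ is $d$-AD regular, then for $x\in\supp(\sigma)$ and small $r$ one has $\sigma(B(x,r))\geq cr^d$, hence $\sigma(B(x,r))/\mu(B(x,r))\geq (c/c_n)r^{d-n}\to+\infty$. Since $\sigma$ is locally finite, differentiation theory forces $\mu(\supp(\sigma))=0$, giving $\sigma\perp\mu$. With that replacement your Lebesgue-decomposition paragraph works, and the rest of your proof is correct and matches the paper's.
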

\begin{proof}
Assume that $\varphi=\Phi(G+g)=\Phi(F+f)$ for some $G\mu+g\sigma,F\mu+f\sigma\in\XX$. By Lemma \ref{l3},  
$G\mu+g\sigma=L(\Phi(G+g))=L(\Phi(F+f))
=F\mu+f\sigma$ in the sense of distributions.
Since $d<n$, $\mu$ and $\sigma$ are mutually singular, and
we easily deduce that $G=F$ in $L^2(\mu)^b$ and $g=f$ in $L^2(\sigma)^b$. Hence
$V(\varphi)=-g\sigma=-f\sigma$, so $V$ is well defined. Furthermore, 
$L_V(\varphi)=G\mu+g\sigma-g\sigma=G\mu$ distributionally, which finishes the proof.
\end{proof}

The next proposition states some known results on the trace of functions of Sobolev spaces. Let $W^{r,2}(\mu)^b$ be the Sobolev space of $\C^b$-valued functions such that all its components have all its derivatives up to order $r>0$ in $L^2(\mu)$.

\begin{propo}\label{p1}
Let $\Sigma\subset\R^n$ be closed, let $0<d<n$ and $\sigma$ be the $d$-dimensional Hausdorff measure restricted to $\Sigma$, and assume that $\sigma$ is $d$-AD regular.  
For $G\in\DD$ consider the trace operator $\Tr_\Sigma(G)=G\chi_{\Sigma}$. If $r>0$ is such that $2r>n-d$, then $\Tr_\Sigma$ extends to a bounded linear operator $\Tr_\sigma:W^{r,2}(\mu)^b\to L^2(\sigma)^b$ in the following cases:
\begin{itemize}
\item[$(i)$] if $d>n-1$,
\item[$(ii)$] if $\Sigma$ preserves Markov's inequality (see \cite{Wallin} for the precise definition),
\item[$(iii)$] if $d\in\N$ and $\Sigma$ is a $d$-dimensional compact $\CC^\infty$ manifold in $\R^n$,
\item[$(iv)$] if $\Sigma$ is either the boundary of a bounded Lipschitz domain in $\R^n$ (i.e. $d=n-1$) or the graph of a Lipschitz function from $\R^{n-1}$ to $\R$.
\end{itemize}
\end{propo}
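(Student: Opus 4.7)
The plan is to observe that the trace inequality for smooth $G\in\DD$ reduces to a Schur-type kernel estimate of exactly the same form as Lemma \ref{l1}, after which the four cases amount to standard density and identification results from the literature.

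By density of $\DD$ in $W^{r,2}(\mu)^b$, it is enough to establish
\begin{equation*}
\|G|_{\Sigma}\|_{L^2(\sigma)^b}\le C\|G\|_{W^{r,2}(\mu)^b}\quad\text{for all } G\in\DD,
\end{equation*}
after which $\Tr_\sigma$ is defined by continuous extension. To prove the inequality, I would use the Bessel potential representation $G=J_r*h$ with $h\in L^2(\mu)^b$ and $\|h\|_{L^2(\mu)}\approx\|G\|_{W^{r,2}(\mu)}$. The Bessel kernel $J_r$ satisfies $|J_r(x)|\lesssim|x|^{-n+r}$ near the origin and has exponential decay at infinity, which is exactly the profile of condition $(iii)$ in Section \ref{ss1} with $s$ replaced by $r$. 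Consequently, the argument of Lemma \ref{l1} applied with $\nu=\sigma$ (and with $\phi$ replaced by $J_r$) yields
\begin{equation*}
\|J_r*(g\sigma)\|_{L^2(\mu)^b}\le C\|g\|_{L^2(\sigma)^b}\quad\text{for all } g\in L^2(\sigma)^b,
\end{equation*}
and dualising this bound gives the desired trace estimate. This Schur step uses only that $\sigma$ is $d$-dimensional together with $2r>n-d$, so it handles all four subcases at once.

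It then remains in each case to identify the continuous extension $\Tr_\sigma(G)$ with the pointwise restriction of a suitable representative of $G$ on $\Sigma$. Case (i), $n-1<d<n$, is handled by $d$-dimensional capacity arguments that generalise the classical hyperplane trace theorem to fractional dimensions. Case (ii) is precisely the Jonsson--Wallin theorem, which we would cite from \cite{Wallin}. Case (iii) reduces to (ii), since a compact $\CC^\infty$ $d$-manifold preserves Markov's inequality (verified locally in flat coordinate charts after a partition of unity). Case (iv) is the classical Gagliardo trace theorem for Lipschitz boundaries, while for Lipschitz graphs it further reduces to the hyperplane case via bi-Lipschitz flattening on a finite cover.

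The main difficulty I anticipate is the pointwise identification in case (ii): proving that the abstract continuous extension $\Tr_\sigma(G)$ truly agrees with the $\sigma$-almost everywhere restriction of a canonical representative of $G$ genuinely requires the full Jonsson--Wallin theory of Markov-inequality preserving sets, which is technically heavy. Since the proposition is presented as a compilation of known results, the cleanest way to carry the plan out is to prove the inequality uniformly via the Schur estimate in the second paragraph and then cite \cite{Wallin} (together with the classical Gagliardo theory) for the pointwise identification in each of the four listed settings.
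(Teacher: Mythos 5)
Your proposal is correct, but it takes a genuinely different route from the paper: the paper's proof of Proposition \ref{p1} is a pure compilation of citations (cases $(i)$ and $(ii)$ from \cite{Wallin}, case $(iv)$ from \cite{Marschall}, case $(iii)$ via \cite{Folland}), whereas you give a self-contained argument. Your core step is sound: writing $G=J_r*h$ with $\|h\|_\mu\approx\|G\|_{W^{r,2}(\mu)^b}$, the Schur-test computation of Lemma \ref{l1} applies verbatim to $J_r$ (which has the profile $|x|^{-n+r}$ near $0$ and exponential decay, i.e. condition $(iii)$ of Section \ref{ss1} with $s$ replaced by $r$), and the exponents match precisely when $2r>n-d$; the duality identity $\langle G\chi_\Sigma,g\rangle_\sigma=\langle h, J_r*(g\sigma)\rangle_\mu$ then yields $\|G\chi_\Sigma\|_\sigma\le C\|G\|_{W^{r,2}(\mu)^b}$ for $G\in\DD$, and density of $\DD$ in $W^{r,2}(\mu)^b$ gives the bounded extension. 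Two remarks on what each approach buys. First, your argument uses only the upper regularity $\sigma(B(x,\rho))\le C\rho^d$, so it proves all four cases at once and in fact renders the case distinction and the lower AD-regularity hypothesis superfluous for the boundedness assertion; this is stronger and cleaner than the paper's treatment, at the cost of not recovering the finer Besov-space description of the trace that \cite{Wallin} and \cite{Marschall} provide. Second, the "pointwise identification" you flag as the main difficulty is not actually demanded by the statement: the proposition only asks for a bounded extension of the operator $G\mapsto G\chi_\Sigma$ defined on $\DD$, and that extension is unique by density, so your final paragraph is addressing a refinement (quasi-everywhere identification of the extension with a canonical representative) that neither the statement nor its use in Corollary \ref{c2} requires. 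You could therefore drop the last paragraph entirely and your proof would be complete.
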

\begin{proof}
The cases $(i)$ and $(ii)$ are a direct consequence of \cite[Propositions 2 and 4]{Wallin}. The case $(iv)$ follows by \cite{Marschall}, and $(iii)$ can be obtained by the arguments in \cite[Corollary 6.26]{Folland}.
\end{proof}

\begin{remark}
It is known that the trace operator $\Tr_\Sigma$ extends to a bounded linear operator from $W^{r,2}(\mu)^b$ to $L^2(\sigma)^b$ in other cases besides the ones in Proposition \ref{p1}. However, the already mentioned ones are enough for our purposes (in particular $(iv)$). Let us also mention that the trace operator fails to be bounded for $2r=n-d$ even for $d$-planes in $\R^n$, so the condition $2r>n-d$ is sharp in this sense.
\end{remark}

\begin{lema}\label{l2}
We have $\|\Phi(G)\|_{W^{s,2}(\mu)^b}\leq C\|G\|_{\mu}$ for all $G\in L^2(\mu)^b$. 
\end{lema}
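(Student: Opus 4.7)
The plan is to work on the Fourier side and exploit condition $(iii)(c)$ from Subsection \ref{ss1}, which says that $(1+|\xi|^2)^{s/2}|\FF(\phi_{j,k})(\xi)|$ is uniformly bounded in $\xi$ and in $j,k$. Recall that the Sobolev norm admits the equivalent characterization
\begin{equation*}
\|f\|_{W^{s,2}(\mu)^b}^2 \sim \int_{\R^n}(1+|\xi|^2)^{s}\,|\FF(f)(\xi)|^2\,d\mu(\xi),
\end{equation*}
so by Plancherel it suffices to estimate the weighted $L^2$ norm of $\FF(\Phi(G))$.

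First I would observe that $\Phi(G)=\phi\ast G\mu$ is a matrix-valued convolution of an $L^1_{\mathrm{loc}}$ kernel against an $L^2$ function; by Lemma \ref{l1} applied to $\nu=\mu$ it lies in $L^2(\mu)^b$, hence its Fourier transform is a well-defined tempered distribution and one has $\FF(\Phi(G))(\xi)=\FF(\phi)(\xi)\,\FF(G)(\xi)$ as a pointwise matrix-vector product. Then, using the uniform bound on each entry $(1+|\xi|^2)^{s/2}|\FF(\phi_{j,k})(\xi)|\leq C$, the operator norm of the matrix $(1+|\xi|^2)^{s/2}\FF(\phi)(\xi)$ is bounded by a constant $C$ independent of $\xi$, so
\begin{equation*}
(1+|\xi|^2)^{s/2}\bigl|\FF(\Phi(G))(\xi)\bigr| \leq C\,|\FF(G)(\xi)| \quad\text{for a.e. }\xi\in\R^n.
\end{equation*}

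Squaring, integrating, and applying Plancherel on the right-hand side gives
\begin{equation*}
\|\Phi(G)\|_{W^{s,2}(\mu)^b}^2 \leq C\int(1+|\xi|^2)^{s}|\FF(\Phi(G))|^2\,d\mu \leq C\int|\FF(G)|^2\,d\mu = C\|G\|_\mu^2,
\end{equation*}
which is the desired inequality. The only delicate point is the initial justification that $\FF(\Phi(G))=\FF(\phi)\FF(G)$ pointwise a.e., since $\FF(\phi)$ is only defined as a distribution a priori; this is handled by the observation that $(iii)(c)$ forces $\FF(\phi)$ to be a locally bounded measurable function (away from possible singularities absorbed into the weight), so the product makes sense and coincides with the convolution representation after approximating $G$ by Schwartz functions, where the identity is classical. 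No other obstacle is expected; the argument is essentially Plancherel combined with the symbol bound on $\phi$.
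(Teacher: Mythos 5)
Your argument is correct and is exactly the paper's proof: condition $(iii)(c)$ gives the symbol bound $(1+|\xi|^2)^{s/2}|\FF(\phi_{j,k})(\xi)|\leq C$, and Plancherel converts it into the Sobolev estimate. The extra care you take in justifying $\FF(\phi* G\mu)=\FF(\phi)\FF(G)$ pointwise is a reasonable filling-in of a detail the paper leaves implicit.
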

\begin{proof}
It follows by $(iii)(c)$ in Section \ref{ss1} and a direct application of Plancherel's theorem. 
\end{proof}

\begin{coro}\label{c2}
Let $\Sigma$ and $\sigma$ be as in any of the cases in Proposition \ref{p1} with
 $2s>n-d$.
For $G\in L^2(\mu)^b$, set $$\Phi_\sigma(G)=\Tr_\sigma(\Phi(G))=\Tr_\sigma(\phi*G\mu).$$ Then, $\Phi_\sigma$ is well defined and
$\|\Phi_\sigma(G)\|_\sigma\leq C\|G\|_\mu$ for all $G\in L^2(\mu)^b$. 
\end{coro}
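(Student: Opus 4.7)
The plan is to chain together the two preceding results in a direct way: Lemma \ref{l2} controls the Sobolev regularity of $\Phi(G)$ in terms of $\|G\|_\mu$, and Proposition \ref{p1} provides a bounded trace operator from that Sobolev space into $L^2(\sigma)^b$. The hypothesis $2s > n-d$ is precisely what lines these two pieces up.

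More concretely, given $G \in L^2(\mu)^b$, I would first invoke Lemma \ref{l2} to get $\Phi(G) \in W^{s,2}(\mu)^b$ with
\[
\|\Phi(G)\|_{W^{s,2}(\mu)^b} \leq C\,\|G\|_\mu .
\]
A priori $\Phi(G)$ is only defined $\mu$-almost everywhere, so one cannot literally restrict it pointwise to $\Sigma$. However, the hypothesis on $\Sigma$ and $\sigma$ places us in one of the settings of Proposition \ref{p1}, and since $2s > n-d$, taking $r = s$ in that proposition yields a well-defined bounded linear trace
\[
\Tr_\sigma : W^{s,2}(\mu)^b \longrightarrow L^2(\sigma)^b,
\]
extending the pointwise restriction $\Tr_\Sigma$ defined on $\DD$. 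Setting $\Phi_\sigma(G) := \Tr_\sigma(\Phi(G))$ is therefore meaningful, and composing the two inequalities gives
\[
\|\Phi_\sigma(G)\|_\sigma = \|\Tr_\sigma(\Phi(G))\|_\sigma \leq C\,\|\Phi(G)\|_{W^{s,2}(\mu)^b} \leq C\,\|G\|_\mu ,
\]
which is the desired estimate.

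There is no real obstacle here; the only subtlety worth flagging in the write-up is that well-definedness of $\Phi_\sigma(G)$ must be understood via the bounded extension of Proposition \ref{p1} rather than by naive pointwise restriction, since elements of $W^{s,2}(\mu)^b$ are only equivalence classes modulo $\mu$-null sets while $\sigma$ is singular with respect to $\mu$ when $d<n$.
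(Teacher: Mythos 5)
Your proof is correct and is exactly the paper's argument: apply Lemma \ref{l2} to bound $\|\Phi(G)\|_{W^{s,2}(\mu)^b}$ by $\|G\|_\mu$, then compose with the bounded trace operator of Proposition \ref{p1} with $r=s$ (the condition $2s>n-d$ being what makes that proposition applicable). The remark on well-definedness via the bounded extension rather than pointwise restriction is a reasonable clarification that the paper leaves implicit.
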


\begin{proof}
Use Lemma \ref{l2} and Proposition \ref{p1} with $r=s$.
\end{proof}

\begin{lema}\label{l4}
Let $\Sigma$ and $\sigma$ be as in any of the cases in Proposition \ref{p1} with
$2s>n-d$. Then, for every $F\mu,G\mu,g\sigma\in\XX$, we have
\begin{equation*}
\langle \Phi(G),F\rangle_\mu
=\langle G,\Phi(F)\rangle_\mu\quad\text{and}\quad
\langle \Phi(g),F\rangle_\mu
=\langle g,\Phi_\sigma(F)\rangle_\sigma.
\end{equation*}
\end{lema}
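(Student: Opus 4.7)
The plan is to reduce both identities to Fubini's theorem combined with the symmetry property $(ii)$ of the fundamental solution, namely $\phi_{j,k}(x-y)=\overline{\phi_{k,j}(y-x)}$. In coordinates,
\begin{equation*}
\langle \Phi(G),F\rangle_\mu=\sum_{j,k}\iint \phi_{j,k}(x-y)G_k(y)\overline{F_j(x)}\,d\mu(y)d\mu(x),
\end{equation*}
and $(ii)$ gives $\phi_{j,k}(x-y)G_k(y)\overline{F_j(x)}=G_k(y)\overline{\phi_{k,j}(y-x)}\,\overline{F_j(x)}$, which is exactly the integrand of $\langle G,\Phi(F)\rangle_\mu$. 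So once Fubini is justified, the first identity drops out by swapping the order of integration.

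First I would justify Fubini for the first identity. Setting $K(x)=\sup_{j,k}|\phi_{j,k}(x)|$, the proof of Lemma \ref{l1} applied with $\nu=\mu$ (so $d=n$, and $2s>n-d=0$ is automatic) actually shows $\|K*|G|\mu\|_\mu\leq C\|G\|_\mu$, hence by Cauchy--Schwarz
\begin{equation*}
\iint K(x-y)|G(y)||F(x)|\,d\mu(y)d\mu(x)\leq \|F\|_\mu\,\|K*|G|\mu\|_\mu\leq C\|F\|_\mu\|G\|_\mu<\infty.
\end{equation*}
This yields the first identity immediately.

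For the second identity I would argue by density. Both sides are bounded bilinear forms on $L^2(\mu)^b\times L^2(\sigma)^b$: the left side by Lemma \ref{l1} (with $\nu=\sigma$, using $2s>n-d$), giving $|\langle \Phi(g),F\rangle_\mu|\leq C\|g\|_\sigma\|F\|_\mu$; the right side by Corollary \ref{c2}, giving $|\langle g,\Phi_\sigma(F)\rangle_\sigma|\leq C\|g\|_\sigma\|F\|_\mu$. Since $\DD$ is dense in $L^2(\mu)^b$, it suffices to check the identity for $F\in\DD$. For such $F$, the convolution $\Phi(F)=\phi*F\mu$ is continuous on $\R^n$ (in fact smooth, since $F$ is smooth and compactly supported and $\phi$ is locally integrable by $(iii)(a)$ together with $s<n$), so $\Tr_\sigma(\Phi(F))(y)$ coincides $\sigma$-a.e.\ with the pointwise value $\int\phi(y-x)F(x)\,d\mu(x)$. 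The same absolute-integrability argument as above, this time with $d\mu(y)$ replaced by $d\sigma(y)$ on the inside (again via the proof of Lemma \ref{l1}), justifies Fubini, and then $(ii)$ matches the two integrands just as in the first identity.

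The main obstacle, and the only genuine subtlety, is identifying $\Phi_\sigma(F)$ with the pointwise integral $\int\phi(y-x)F(x)d\mu(x)$ when passing from smooth $F$ to general $F\in L^2(\mu)^b$; this is precisely why I reduce to the dense class $\DD$ and then extend by the $L^2$-continuity of both sides provided by Lemma \ref{l1} and Corollary \ref{c2}. Everything else is bookkeeping with indices and an application of Fubini.
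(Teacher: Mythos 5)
Your proof is correct and follows essentially the same route as the paper's: Fubini's theorem plus the symmetry $(ii)$ of $\phi$, with the second identity obtained by first verifying it on an approximating class of $F$ for which $\Phi_\sigma(F)$ is unambiguously the pointwise integral, and then extending via the $L^2$ bounds from Lemma \ref{l1} and Corollary \ref{c2}. The only (inessential) difference is the choice of approximating class: you take $F\in\DD$ and use continuity of $\phi*F\mu$, whereas the paper truncates $F$ to $F\chi_{\Omega_\epsilon}$ with $\Omega_\epsilon$ bounded and at distance $\epsilon$ from $\Sigma$.
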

\begin{proof}
By $(ii)$ in Section \ref{ss1}, Lemma \ref{l1}, and Fubini's theorem we have
$\int(\phi*G\mu)\overline F\,d\mu=\int G\overline {(\phi*F\mu)}\,d\mu$
for all $F,G\in L^2(\mu)^b$, which means  
$\langle \Phi(G),F\rangle_\mu
=\langle G,\Phi(F)\rangle_\mu$. 

Let us now prove that $\langle \Phi(g),F\rangle_\mu
=\langle g,\Phi_\sigma(F)\rangle_\sigma$. Given $\epsilon>0$, set $$\Omega_\epsilon=\{x\in\R^n:\, |x|<1/\epsilon,\,\dist(x,\Sigma)>\epsilon\}$$ and
$F_\epsilon= F\chi_{\Omega_\epsilon}\in L^1(\mu)^b\cap L^2(\mu)^b.$
Then $\Phi_\sigma(F_\epsilon\mu)(y)=\int_{\Omega_\epsilon}\phi(y-x)F(x)\,d\mu(x)$ for all $y\in\Sigma$ (the integral converges absolutelly).
By $(ii)$ in Section \ref{ss1} and Fubini's theorem,
\begin{equation}\label{l4eq1}
\begin{split}
\langle \Phi(g),F_\epsilon\rangle_\mu
&=\int_{\Omega_\epsilon}\int\phi(x-y)g(y)\cdot\overline{F(x)}\,d\sigma(y)\,d\mu(x)\\
&=\int\int_{\Omega_\epsilon}
g(y)\cdot\overline{\phi(y-x)F(x)}\,d\mu(x)\,d\sigma(y)
=\langle g,\Phi_\sigma(F_\epsilon)\rangle_\sigma.
\end{split}
\end{equation}
Corollary \ref{c1} yields
$|\langle \Phi(g),F-F_\epsilon\rangle_\mu|\leq C
\|g\|_\sigma\|F-F_\epsilon\|_\mu$,
and by Corollary \ref{c2} we have
$|\langle g,\Phi_\sigma(F_\epsilon-F)\rangle_\sigma|
\leq C\|g\|_\sigma\|F-F_\epsilon\|_\mu$.
Therefore, using the triangle inequality and (\ref{l4eq1}), 
\begin{equation*}
\begin{split}
|\langle \Phi(g),F\rangle_\mu 
-\langle g,\Phi_\sigma(F)\rangle_\sigma|
&\leq|\langle \Phi(g),F-F_\epsilon\rangle_\mu|
+|\langle g,\Phi_\sigma(F_\epsilon-F)\rangle_\sigma|
\leq C\|g\|_\sigma\|F-F_\epsilon\|_\mu.
\end{split}
\end{equation*}
The lemma follows by taking $\epsilon\searrow0$ and dominate convergence.
\end{proof}

\subsection{Main result}\label{ss2}

For the rest of this section, we assume that $\Sigma$ and $\sigma$ are as in any of the cases in Proposition \ref{p1} with $2s>n-d$. Given an operator between vector spaces $S:X\to Y$, denote $\Ker(S)=\{x\in X:\, S(x)=0\}$ and $\Ran(S)=\{S(x)\in Y:\, x\in X\}$.

\begin{teo}\label{t1}  
Let $\Lambda:L^2(\sigma)^b\to L^2(\sigma)^b$ be a bounded linear self-adjoint operator.  
\begin{itemize}
\item[$(i)$] For
$D(T)=\{\Phi(G+g): G\mu+g\sigma\in\XX,\,\Lambda(\Phi_\sigma(G))=g)\}\subset L^2(\mu)^b$ and $T=L_V$ on $D(T)$, $T:D(T)\to L^2(\mu)^b$ is a self-adjoint operator.
\item[$(ii)$] If $\Ran(\Lambda)$ is closed, for
$D(T)=\{\Phi(G+g): G\mu+g\sigma\in\XX,\,\Phi_\sigma(G)=\Lambda(g)\}\subset L^2(\mu)^b$ and $T=L_V$ on $D(T)$, $T:D(T)\to L^2(\mu)^b$ is an essentially self-adjoint operator, i.e, $\overline T$ is self-adjoint. Moreover, $D(\overline T)=D(T)+D'$, where $D'$ is the closure in $L^2(\mu)^b$ of $\{{\Phi(h)}:\,h\in\Ker(\Lambda)\}$, and $\overline T(D')=0$.
\item[$(iii)$] For $\Lambda$ and $T$ as in $(ii)$, if $\{{\Phi(h)}:\,h\in\Ker(\Lambda)\}$ is closed, then $T$ is self-adjoint. This occurs, for example, if $\Ker(\Lambda)=\{0\}$.
\end{itemize}
\end{teo}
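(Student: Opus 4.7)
The plan is to analyze all three parts of the theorem by computing the adjoint $T^\ast$ directly, using as the central tool the duality identity of Lemma \ref{l4}: viewed as bounded operators, $\Phi_\sigma:L^2(\mu)^b\to L^2(\sigma)^b$ and the restriction $h\mapsto\Phi(h)$ of $\Phi$ to $L^2(\sigma)^b$ are mutual adjoints, i.e.\ $\langle\Phi_\sigma(F),h\rangle_\sigma=\langle F,\Phi(h)\rangle_\mu$. Combined with Corollary \ref{c3} (which identifies $T\varphi$ with $G$ whenever $\varphi=\Phi(G+g)$), this lets me translate every computation on $D(T)$ into a test against an arbitrary element of $L^2(\mu)^b$ and then read off $\psi$ by non-degeneracy.

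Symmetry of $T$ in both (i) and (ii) is the first step: for $\varphi_i=\Phi(G_i+g_i)\in D(T)$, Lemma \ref{l4} collapses $\langle T\varphi_1,\varphi_2\rangle_\mu-\langle\varphi_1,T\varphi_2\rangle_\mu$ to the boundary term $\langle\Phi_\sigma(G_1),g_2\rangle_\sigma-\langle g_1,\Phi_\sigma(G_2)\rangle_\sigma$, and in either domain the self-adjointness of $\Lambda$ together with the defining relation makes this vanish. For part (i), I would then plug a general $\varphi=\Phi(G+\Lambda(\Phi_\sigma(G)))$ into $\langle T\varphi,\psi\rangle_\mu=\langle\varphi,\eta\rangle_\mu$; Lemma \ref{l4} together with $\Lambda=\Lambda^\ast$ rearranges the right-hand side to $\langle G,\Phi(\eta+\Lambda(\Phi_\sigma(\eta)))\rangle_\mu$, so the arbitrariness of $G$ forces $\psi=\Phi(\eta+\Lambda(\Phi_\sigma(\eta)))$, which is precisely the condition $\psi\in D(T)$ with $T\psi=\eta$. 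Hence $T=T^\ast$.

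For part (ii) the same testing yields, for every compatible pair $(G,g)$ with $\Phi_\sigma(G)=\Lambda(g)$, the identity $\langle G,\psi-\Phi(\eta)\rangle_\mu=\langle g,\Phi_\sigma(\eta)\rangle_\sigma$. Taking $G=0$ and $g\in\Ker(\Lambda)$ forces $\Phi_\sigma(\eta)\in\Ker(\Lambda)^\perp=\Ran(\Lambda)$ (using $\Lambda=\Lambda^\ast$ and closed range), so $\Phi_\sigma(\eta)=\Lambda(k)$ for some $k$. Substituting back and applying the duality identity twice, the right-hand side rewrites as $\langle G,\Phi(k)\rangle_\mu$, so that $\rho:=\psi-\Phi(\eta+k)$ is orthogonal to $M:=\{G\in L^2(\mu)^b:\Phi_\sigma(G)\in\Ran(\Lambda)\}$. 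The key, and I expect most delicate, step is the identification
\begin{equation*}
M^\perp=\overline{\Phi(\Ker(\Lambda))}=D',
\end{equation*}
which follows by unwinding: $G\in M$ iff $\langle\Phi_\sigma(G),h\rangle_\sigma=0$ for every $h\in\Ker(\Lambda)$, and by the duality this is iff $G\perp\Phi(\Ker(\Lambda))$. With $M^\perp=D'$ in hand, $\psi=\Phi(\eta+k)+\rho$ with $(\eta,k)$ compatible and $\rho\in D'$, so $D(T^\ast)\subset D(T)+D'$ and $T^\ast$ acts as the statement prescribes.

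The reverse inclusion $D(T)+D'\subset D(\overline T)$ is then a closure argument: given $\varphi_n=\Phi(G_n+g_n)\in D(T)$ with $\varphi_n\to\varphi$ in $L^2(\mu)^b$ and $G_n=T\varphi_n\to\eta$, Corollary \ref{c1} and a splitting $g_n=g_n'+g_n''$ with $g_n'\in\Ker(\Lambda)^\perp$ and $g_n''\in\Ker(\Lambda)$ (the closedness of $\Ran(\Lambda)$ providing a bounded partial inverse on $\Ran(\Lambda)$, so that $g_n'\to g'$ with $\Lambda(g')=\Phi_\sigma(\eta)$) identify $\varphi$ as $\Phi(\eta+g')+\zeta$ with $\zeta=\lim\Phi(g_n'')\in D'$, and $\overline T\varphi=\eta$. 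Part (iii) is then immediate: if $\Phi(\Ker(\Lambda))$ is already closed, then $D'=\Phi(\Ker(\Lambda))\subset D(T)$ (taking $G=0$, $g=h\in\Ker(\Lambda)$ makes $(0,h)$ compatible), so $D(T)+D'=D(T)$ and the essential self-adjointness from (ii) upgrades to self-adjointness; the case $\Ker(\Lambda)=\{0\}$ is a trivial instance.
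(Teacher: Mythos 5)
Your proposal follows essentially the same route as the paper: symmetry via Lemma \ref{l4}, a direct computation showing $T^*\subset T$ for part $(i)$, and for part $(ii)$ the decomposition $L^2(\sigma)^b=\Ker(\Lambda)\oplus\Ran(\Lambda)$ together with the identification of the annihilator of $\{G:\Phi_\sigma(G)\in\Ran(\Lambda)\}$ with $D'=\overline{\{\Phi(h):h\in\Ker(\Lambda)\}}$ via the biorthogonal complement; part $(iii)$ is handled identically. Two loose ends are worth flagging. First, you never verify that $D(T)$ is dense in $L^2(\mu)^b$, which is needed for $T^*$ to be defined at all; this is immediate because any $F\in\CC^\infty_c(\R^n\setminus\Sigma)^b$ equals $\Phi(L(F))$ with $\Phi_\sigma(L(F))=\Tr_\sigma(F)=0$, hence belongs to $D(T)$ with $g=0$. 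Second, the paragraph you label as the ``reverse inclusion'' actually argues $D(\overline T)\subset D(T)+D'$, which you already get for free from $\overline T\subset T^*$ and your computation of $T^*$; what is still required to conclude essential self-adjointness is the containment $\Gamma(T^*)\subset\overline{\Gamma(T)}=\Gamma(\overline T)$, i.e., that each pair $(\Phi(\eta+k)+\rho,\eta)$ with $\rho=\lim_j\Phi(h_j)$, $h_j\in\Ker(\Lambda)$, is the graph limit of $(\Phi(\eta+k+h_j),\eta)\in\Gamma(T)$. This is a one-line observation (it is how the paper closes the argument), but it is the step that yields $\overline T=T^*$ and hence $(\overline T)^*=T^{**}=\overline T$; as written, your text stops just short of assembling it.
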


\begin{remark}\label{r2}
Given $G\in L^2(\mu)^b$, we have $\Phi(G)\in W^{s,2}(\mu)^b$ by Lemma \ref{l2}. On the other hand, given $u\in W^{s,2}(\mu)^b$, if we set $G=L(u)\in L^2(\mu)^b$ (recall that $L$ is of order $s$), we have that $\Phi(G)=\phi*L(u)=u$. Therefore, for $T$ as in Theorem \ref{t1}$(i)$, we obtain
$$D(T)=\big\{u+\Phi(g): u\in W^{s,2}(\mu)^b,\,g\in L^2(\sigma)^b,\,\Lambda(\Tr_\sigma(u))=g\big\},$$
and moreover $T(u+\Phi(g))=L(u)$ for all $u+\Phi(g)\in D(T)$. The respective conclusions hold for $T$ as in Theorem \ref{t1}$(ii)$ and $(iii)$.

\end{remark}

\begin{proof}[Proof of {\em Theorem \ref{t1}}]
We are going to prove $(ii)$ first, which will follow from the following statements:
\begin{itemize}
\item[$(a)$] $D(T)$ is a dense subspace of $ L^2(\mu)^b$,
\item[$(b)$] $T$ is a symmetric operator on $D(T)$,  
\item[$(c)$] $T^*\subset\overline T$.
\end{itemize}

{\em Proof of $(a)$}. That $D(T)$ is a subspace of $ L^2(\mu)^b$ is obvious, so we have to check that it is dense. We know that $\CC^\infty_c(\R^n\setminus\Sigma)^b\subset\DD$ is dense in $L^2(\mu)^b$, because $\sigma$ is $d$-dimensional and $d<n$. Given $F\in\CC^\infty_c(\R^n\setminus\Sigma)^b$ set $G=L(F)\in\CC^\infty_c(\R^n\setminus\Sigma)^b$. Since $\phi$ is the fundamental solution of $L$, we have $\Phi(G)=\phi*G\mu=F$ and $\Phi_\sigma(G)=\Tr_\sigma(F)=0$. Therefore $F=\Phi(G)\in D(T)$, which easily yields $(a)$.

{\em Proof of $(b)$}.
Let $\varphi,\psi\in D(T)$. Then $\varphi=\Phi(G+g)$ and $\psi=\Phi(F+f)$ for some
$G\mu+g\sigma$, $F\mu+f\sigma\in\XX$ with $\Phi_\sigma(G)=\Lambda(g)$ and $\Phi_\sigma(F)=\Lambda(f)$.
By Corollary \ref{c3}, $T(\varphi)=L_V(\varphi)=G$ and $T(\psi)=L_V(\psi)=F$. Hence, using  Lemma \ref{l4} and that $\Lambda$ is self-adjoint in $L^2(\sigma)^b$,
\begin{equation*}
\begin{split}
\langle T(\varphi),\psi\rangle_\mu
-\langle\varphi,T(\psi)\rangle_\mu
&=\langle G,\Phi(F+f)\rangle_\mu
-\langle\Phi(G+g), F\rangle_\mu\\
&=\langle G,\Phi(F)\rangle_\mu
-\langle\Phi(G), F\rangle_\mu
+\langle G,\Phi(f)\rangle_\mu
-\langle\Phi(g), F\rangle_\mu\\
&=\langle\Phi_\sigma(G),f\rangle_\sigma
-\langle g,\Phi_\sigma(F)\rangle_\sigma
=\langle\Lambda(g),f\rangle_\sigma
-\langle g,\Lambda(f)\rangle_\sigma=0,
\end{split}
\end{equation*}
which proves $(b)$.

{\em Proof of $(c)$}. Given an operator
$S:D(S)\subset L^2(\mu)^b\to L^2(\mu)^b$, denote by $\Gamma(S)$ the graph of $S$, i.e.,
$$\Gamma(S)=\{(\varphi,S(\varphi)):\,\varphi\in D(S)\}\subset L^2(\mu)^b\times L^2(\mu)^b.$$

From $(a)$ and $(b)$ we have that $T$ is a densely defined symmetric operator. Thus $T$ is closable by \cite[page 255]{RS}, and $\overline T$ is well defined. Moreover, 
$\Gamma(\overline T)=\overline{\Gamma(T)}$ by \cite[page 250]{RS}. Hence, to prove $(c)$ we only have to verify that $\Gamma(T^*)\subset \overline{\Gamma(T)}$. 

Let $(\psi,F)\in\Gamma(T^*)$, that is, let $\psi, F\in L^2(\mu)^b$ such that
\begin{equation}\label{t1eq1}
\langle T(\varphi),\psi\rangle_\mu=\langle\varphi,F\rangle_\mu
\quad\text{for all }\varphi\in D(T).
\end{equation}
Since $\Lambda$ is bounded, self-adjoint, and $\Ran(\Lambda)$ is closed, then $L^2(\sigma)^b=\Ker(\Lambda)\oplus\Ran(\Lambda)$, so $\Phi_\sigma(F)=h+\Lambda(f)$ for some $h,f\in L^2(\sigma)^b$ with $\Lambda(h)=0$. Notice that $\Phi(h)\in D(T)$ and $T(\Phi(h))=0$, so using (\ref{t1eq1}) with $\varphi=\Phi(h)$ gives $$0=\langle\Phi(h),F\rangle_\mu
=\langle h,\Phi_\sigma(F)\rangle_\sigma
=\langle h,h+\Lambda(f)\rangle_\sigma
=\|h\|_\sigma^2,$$
which actually means that $\Phi_\sigma(F)=\Lambda(f)$. Now, for any $G\mu+g\sigma\in\XX$ such that $\Phi(G+g)\in D(T)$, $(\ref{t1eq1})$ yields
\begin{equation}\label{t1eq4}
\begin{split}
\langle G,\psi\rangle_\mu
&=\langle T(\Phi(G+g)),\psi\rangle_\mu
=\langle\Phi(G+g),F\rangle_\mu
=\langle G,\Phi(F)\rangle_\mu+\langle g,\Phi_\sigma(F)\rangle_\sigma\\
&=\langle G,\Phi(F)\rangle_\mu+\langle g,\Lambda(f)\rangle_\sigma
=\langle G,\Phi(F)\rangle_\mu+\langle \Lambda(g),f\rangle_\sigma\\
&=\langle G,\Phi(F)\rangle_\mu+\langle \Phi_\sigma(G),f\rangle_\sigma
=\langle G,\Phi(F+f)\rangle_\mu,
\end{split}
\end{equation}
which implies that 
\begin{equation}\label{t1eq2}
\langle G,\psi-\Phi(F+f)\rangle_\mu=0
\quad\text{for all }G\in L^2(\mu)^b\text{ such that }\Phi_\sigma(G)\in\Ran(\Lambda).
\end{equation}
Since $\Lambda$ is self-adjoint and $\Ran(\Lambda)$ is closed, $\Phi_\sigma(G)\in\Ran(\Lambda)$ if and only if $0=\langle\Phi_\sigma(G),h\rangle_\sigma
=\langle G,\Phi(h)\rangle_\mu$ for all $h\in\Ker(\Lambda)$. From (\ref{t1eq2}), we deduce that $\langle G,\psi-\Phi(F+f)\rangle_\mu=0$
for all $G\in L^2(\mu)^b$ such that $\langle G,\Phi(h)\rangle_\mu=0$ for all $h\in\Ker(\Lambda)$, that is, 
$$\psi-\Phi(F+f)\in\{{\Phi(h)}:\,h\in\Ker(\Lambda)\}^{\bot\bot}=D',$$
where $D'$ is the closure in $L^2(\mu)^b$ of $\{{\Phi(h)}:\,h\in\Ker(\Lambda)\}$. Hence, there exists $\{h_j\}_{j\in\N}\subset\Ker(\Lambda)$ such that 
\begin{equation}\label{t1eq3}
\psi=\lim_{j\to\infty}\Phi(F+f+h_j)\quad\text{in }L^2(\mu)^b.
\end{equation} 
Set $\psi_j=\Phi(F+f+h_j)$, then $\Phi_\sigma(F)=\Lambda(f)=\Lambda(f+h_j)$ and $T(\psi_j)=F$, so $(\psi_j,F)\in \Gamma(T)$. Moreover, $(\psi,F)=\lim_{j\to\infty}(\psi_j,F)$ in $L^2(\mu)^b\times L^2(\mu)^b$, which implies that 
$(\psi,F)\in\overline{\Gamma(T)}$. Therefore, $\Gamma(T^*)\subset\overline{\Gamma(T)}$, and $(c)$ is proved.

From $(a)$ and $(b)$, $T\subset T^*$. Taking adjoints, $T^*\supset T^{**}$, but $T^{**}=\overline T$ by \cite[page 253]{RS}, so by $(c)$ we have 
$T^*\subset\overline T\subset T^*$, i.e., $\overline T=T^*$. Therefore, 
$(\overline T)^*=T^{**}=\overline T$ and $\overline T$ is self-adjoint, which proves the first statement in $(ii)$. For proving the second one, recall that $\overline T=T^*$, so $\Gamma(\overline T)=\Gamma(T^*)$. In (\ref{t1eq3}) we have seen that any $(\psi,F)\in\Gamma(T^*)$ can be written as $(\Phi(F+f)+\lim_{j\to\infty}\Phi(h_j),F)$ with $h_j\in\Ker(\Lambda)$ and $\Phi_\sigma(F)=\Lambda(f)$, so $D(\overline T)=D(T^*)\subset D(T)+D'$. On the contrary, given $\psi=\Phi(F+f)+\lim_{j\to\infty}\Phi(h_j)\in D(T)+D'$, we have $(\Phi(F+f+h_j),F)\in\Gamma(T)$ for all $j\in\N$, so $(\psi,F)\in\overline{\Gamma(T)}=\Gamma(\overline T)$, which implies that 
$D(\overline T)=D(T)+D'$ and $\overline T(D')=\{0\}$. This finishes the proof of $(ii)$.

For $(iii)$, notice that $\{{\Phi(h)}:\,h\in\Ker(\Lambda)\}\subset D(T)$, so if $\{{\Phi(h)}:\,h\in\Ker(\Lambda)\}=D'$,  then $D'\subset D(T)$ (in particular, $D'$=\{0\} for $\Ker(\Lambda)=\{0\}$), which means that $\overline T=T$. 

Finally, concerning $(i)$, we can proceed as in the proof of $(ii)$. The proof of $(a)$ and $(b)$ are analogous, but instead of $(c)$, we prove that $T^*\subset T$, and then by $(b)$ we conclude that $T=T^*$. Let $(\psi,F)\in\Gamma(T^*)$, that is, let $\psi, F\in L^2(\mu)^b$ such that
$\langle T(\varphi),\psi\rangle_\mu=\langle\varphi,F\rangle_\mu$ for all $\varphi=\Phi(G+g)\in D(T)$. Then, arguing as in (\ref{t1eq4}),
\begin{equation}\label{t1eq5}
\begin{split}
\langle G,\psi\rangle_\mu
&=\langle\Phi(G+g),F\rangle_\mu
=\langle G,\Phi(F)\rangle_\mu+\langle g,\Phi_\sigma(F)\rangle_\sigma\\
&=\langle G,\Phi(F)\rangle_\mu+\langle \Lambda(\Phi_\sigma(G)),\Phi_\sigma(F)\rangle_\sigma
=\langle G,\Phi\big(F+\Lambda(\Phi_\sigma(F))\big)\rangle_\mu.
\end{split}
\end{equation}
Notice that, for any $G\in L^2(\mu)^b$, we have $\Phi\big(G+\Lambda(\Phi_\sigma(G))\big)\in D(T)$, so (\ref{t1eq5}) holds for all $G\in L^2(\mu)^b$. This implies that $\psi=\Phi\big(F+\Lambda(\Phi_\sigma(F))\big)\in D(T)$ and $T^*(\psi)=F=T(\psi)$, so $T^*\subset T$.
\end{proof}

\begin{remark}
Combining the techniques used in the proof of Theorem \ref{t1} one can show that, if $\Lambda:L^2(\sigma)^b\to L^2(\sigma)^b$ is a bounded self-adjoint operator and $\Ran(\Lambda)$ is closed, then for
$$D(T)=\{\Phi(G+\Lambda(g)): G\mu+g\sigma\in\XX,\,\Phi_\sigma(G)-\Lambda^2(g)\in\Ker(\Lambda)\}\subset L^2(\mu)^b$$ and $T=L_V$ on $D(T)$, $T:D(T)\to L^2(\mu)^b$ is a self-adjoint operator. Just notice that, if $\Lambda$ is self-adjoint and $\Ran(\Lambda)$ is closed, any $\Phi_\sigma(G)$ decomposes as $\Lambda(f)+h$ with $h\in\Ker(\Lambda)$, and by using the same decomposition on $f$, we actually have $\Phi_\sigma(G)=\Lambda^2(g)+h$.
Other possible domains $D(T)$ could also be considered. However, the cases stated in Theorem \ref{t1} are enough for our purposes in the next section.
\end{remark}

\section{On the Dirac operator coupled with measure-valued potentials}\label{s2}

This section is devoted to find self-adjoint extenisons of the Dirac operator coupled with measure-valued potentials, where such measures are singular with respect to the Lebesgue measure. Our main tool to obtain such self-adjoint extensions is Theorem \ref{t1}. Throughout this section, we take $n=3$, $b=4$, $s=1$, we denote by $\mu$ the Lebesgue measure in $\R^3$, and
we assume that $\Sigma$ and $\sigma$ are as in any of the cases in Proposition \ref{p1} with $1<d<3$ (that is $0<d<n$ and $2s>n-d$).

Given $m>0$, the free Dirac operator $H:\DD\to\DD$ is defined by 
$H=-i\alpha\cdot\nabla+m\beta,$
where $\alpha=(\alpha_1,\alpha_2,\alpha_3)$,
\begin{equation}\label{s3eq1}
\begin{split}
&\alpha_j
=\left(\begin{array}{cc} 0 & \sigma_j\\
\sigma_j & 0 \end{array}\right) 
\quad\text{for }j=1,2,3,\qquad
\beta
=\left(\begin{array}{cc} I_2 & 0\\
0 & -I_2 \end{array}\right),\quad\text{and}\\
&\sigma_1
=\left(\begin{array}{cc} 0 & 1\\
1 & 0 \end{array}\right), 
\quad\sigma_2
=\left(\begin{array}{cc} 0 & -i\\
i & 0 \end{array}\right), 
\quad\sigma_3
=\left(\begin{array}{cc} 1 & 0\\
0 & -1 \end{array}\right)
\end{split}
\end{equation}
is the family of {\em Pauli matrices}. 

We have $H:\DD\to\DD$ and,  by duality, $H:\DD^*\to\DD^*$. It is very well known that $H$ restricted to $W^{1,2}(\mu)^4$ is a self-adjoint operator (see \cite{Thaller}). We are going to find domains $E$ with $\CC^\infty_c(\R^3\setminus\Sigma)^4\subset E\subset L^2(\mu)^4$ and potentials $V:E\to\DD^*$ such that, for every $\varphi\in E$, $V(\varphi)$ is supported in $\Sigma$, $H_V=H+V$ restricted to $E$ is a self-adjoint operator with respect to $L^2(\mu)^4$, and $H_V=H$ on $\CC^\infty_c(\R^3\setminus\Sigma)^4$. Moreover, when $\sigma$ is the $2$-dimensional surface measure of the boundary of a bounded Lipschitz domain $\Sigma$, we interpret some of such potentials $V$ in terms of the boundary values of the functions in $E$ when we approach to $\Sigma$ from $\R^n\setminus\Sigma$. 

\begin{lema}\label{l7}
The fundamental solution of the symmetric differential operator $H$ is given by 
$$\phi(x)=\frac{e^{-m|x|}}{4\pi|x|}\left(m\beta+(1+m|x|)\,i\alpha\cdot\frac{x}{|x|^2}\right)\quad\text{for }x\in\R^3\setminus\{0\}.$$
Furthermore, $\phi$ satisfies $(i)$, $(ii)$, and $(iii)$ of Section \ref{ss1} for $m>0$.
\end{lema}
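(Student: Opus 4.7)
The plan is to factorise $H$ via the Clifford-algebra identity $H^2=(-\Delta+m^2)I_4$ and then read off $\phi$ from the Yukawa fundamental solution of $-\Delta+m^2$. From the matrix definitions in (\ref{s3eq1}) one verifies the anticommutation relations $\alpha_j\alpha_k+\alpha_k\alpha_j=2\delta_{jk}I_4$, $\alpha_j\beta+\beta\alpha_j=0$ and $\beta^2=I_4$, and expanding $(-i\alpha\cdot\nabla+m\beta)^2$ then yields $H^2=(-\Delta+m^2)I_4$ (the cross term cancels by the second relation). Since $G(x):=e^{-m|x|}/(4\pi|x|)$ is the standard fundamental solution of $-\Delta+m^2$ in $\R^3$, setting $\phi:=H(G\,I_4)$ gives $H\phi=H^2(G\,I_4)=(-\Delta+m^2)G\cdot I_4=\delta_0 I_4$ in the sense of distributions, so $\phi$ is indeed a fundamental solution of $H$.

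To produce the stated closed form I apply $H=-i\alpha\cdot\nabla+m\beta$ entrywise to $G\,I_4$. Writing $r=|x|$, the chain rule gives $\nabla G(x)=-\frac{e^{-mr}(1+mr)}{4\pi r^3}\,x$, and collecting the two contributions produces
\begin{equation*}
\phi(x)=-i\alpha\cdot\nabla G(x)+m\beta\, G(x)=\frac{e^{-m|x|}}{4\pi|x|}\Big(m\beta+(1+m|x|)\,i\alpha\cdot\tfrac{x}{|x|^2}\Big).
\end{equation*}

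It remains to check $(i)$--$(iii)$ of Section \ref{ss1} with $s=1$. Property $(i)$ is immediate since every entry of $\phi$ is smooth on $\R^3\setminus\{0\}$. For $(iii)(a)$, the dominant term as $|x|\to 0$ is $i\alpha\cdot x/(4\pi|x|^3)=O(|x|^{-2})=O(|x|^{-n+s})$; for $(iii)(b)$, every entry is dominated by $Ce^{-m|x|}$ for $|x|$ large, so any $\gamma\in(0,m)$ suffices. For $(iii)(c)$, the symbol of $H$ is $\alpha\cdot\xi+m\beta$, and by the same Clifford relations $(\alpha\cdot\xi+m\beta)^2=(|\xi|^2+m^2)I_4$; hence $\FF(\phi)(\xi)=(\alpha\cdot\xi+m\beta)/(|\xi|^2+m^2)$ with operator norm $(|\xi|^2+m^2)^{-1/2}$, and the elementary comparison $(1+|\xi|^2)^{1/2}\leq C_m (|\xi|^2+m^2)^{1/2}$ (which is exactly where the hypothesis $m>0$ enters) closes $(iii)(c)$. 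For $(ii)$, note that $\beta$ and each $\alpha_j$ are Hermitian, so $\overline{\beta^t}=\beta$ and $\overline{\alpha_j^t}=\alpha_j$; replacing $x$ by $-x$ flips the sign of the $i\alpha\cdot x$ summand, while the conjugate-transpose reinstates it via $\overline{i}=-i$, giving $\overline{\phi^t}(-x)=\phi(x)$.

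No step is genuinely hard: the only places that demand any care are the sign bookkeeping in $(ii)$ and the verification that the quotient $(1+|\xi|^2)^{1/2}/(|\xi|^2+m^2)^{1/2}$ is uniformly bounded, which is precisely what forces the standing assumption $m>0$.
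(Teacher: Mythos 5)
Your proof is correct and follows the same route as the paper: write $\phi=H(\psi_m I_4)$ with $\psi_m$ the Yukawa fundamental solution, use $H^2=(-\Delta+m^2)I_4$ to get $H\phi=\delta_0 I_4$, and verify $(i)$--$(iii)$ directly (your symbol $\alpha\cdot\xi+m\beta$ versus the paper's $2\pi\alpha\cdot\xi+m\beta$ is only a Fourier-normalisation difference). Your checks of $(ii)$ and $(iii)(c)$ are in fact slightly more explicit than the paper's.
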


\begin{proof}
The relation between the Dirac operator $H$ and the Helmholtz operator $-\Delta+m^2$ is $H^2=(-\Delta+m^2)I_4$. It is well known that the fundamental solution of $-\Delta+m^2$ in $\R^3$ is $\psi_m(x)=e^{-m|x|}(4\pi|x|)^{-1}$ (see \cite[Section 3]{McIntosh} for example). Therefore, by setting 
$$\phi(x)=H(\psi_m(x)I_4)=\frac{e^{-m|x|}}{4\pi|x|}\left(m\beta+(1+m|x|)\,i\alpha\cdot\frac{x}{|x|^2}\right),$$
we deduce that $\phi$ is the fundamental solution of $H$, i.e., $H\phi=\delta_0 I_4$ in the sense of distributions, where $\delta_0$ denotes the Dirac delta measure in $\R^3$ centered at the origin.
Condition $(i)$ of Section \ref{ss1} is trivially satisfied. For the case of $(ii)$, since $\overline{\alpha_j^t}=\alpha_j$ for $j=1,2,3$, and $\overline{\beta^t}=\beta$, we have
\begin{equation*}
\begin{split}
\overline{\phi^t}(y-x)
=\frac{e^{-m|y-x|}}{4\pi|y-x|}\left(m\overline{\beta^t}
+(1+m|y-x|)(-i)\overline{\alpha^t}\cdot\frac{y-x}{|y-x|^2}\right)=\phi(x-y).
\end{split}
\end{equation*}
Conditions $(iii)(a)$ and $(iii)(b)$ are easily verified taking $s=1$ and $\gamma=m$. Finally, for $(iii)(c)$ of Section \ref{ss1}, we know that $H\phi=\delta_0 I_4$ in the sense of distributions. Using the Fourier transform $\FF$ in $\R^3$, 
$$I_4=\FF(\delta_0 I_4)=\FF(H\phi)
=(2\pi\alpha\cdot\xi+m\beta)\FF(\phi)(\xi),$$
hence 
\begin{equation*}
\FF(\phi)(\xi)=(2\pi\alpha\cdot\xi+m\beta)^{-1}
=(4\pi^2|\xi|^2+m^2)^{-1}(2\pi\alpha\cdot\xi+m\beta),
\end{equation*}
which trivially satisfies $(iii)(c)$ of Section \ref{ss1}.
\end{proof}

\begin{coro}
Let $\Sigma$ and $\sigma$ be as in any of the cases in Proposition \ref{p1} with $1<d<3$. Any $\Lambda$ as is Theorem \ref{t1} (with $L=H$) provides a self-adjoint extension of the free Dirac operator $H$ restricted to $\CC^\infty_c(\R^3\setminus\Sigma)^4$.
\end{coro}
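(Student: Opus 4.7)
The plan is to verify the hypotheses of Theorem \ref{t1} with $L=H$ and then observe that the resulting operator agrees with $H$ on test functions supported away from $\Sigma$. By the setup at the start of Section \ref{s2} together with the standing hypothesis $1<d<3$, we have $n=3$, $b=4$, $s=1$, and $2s=2>3-d=n-d$, so the dimensional condition imposed throughout Section \ref{ss2} holds. Lemma \ref{l7} produces the fundamental solution $\phi$ of $H$ and verifies that it satisfies the properties $(i)$, $(ii)$, $(iii)$ of Section \ref{ss1}. Since $H$ is symmetric on $\DD$ and $\Sigma$, $\sigma$ fall in one of the cases of Proposition \ref{p1}, the abstract framework of Section \ref{s1} applies. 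Consequently, for any bounded self-adjoint $\Lambda:L^2(\sigma)^4\to L^2(\sigma)^4$, Theorem \ref{t1} yields a domain $D(T)\subset L^2(\mu)^4$ on which $T=H_V$ is self-adjoint in cases $(i)$ and $(iii)$, and essentially self-adjoint with self-adjoint closure $\overline T$ in case $(ii)$.

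To conclude, I would check that $\CC^\infty_c(\R^3\setminus\Sigma)^4\subset D(T)$ and that $T$ coincides with $H$ there. Fix $f\in\CC^\infty_c(\R^3\setminus\Sigma)^4$ and set $G=H(f)\in\CC^\infty_c(\R^3\setminus\Sigma)^4\subset L^2(\mu)^4$, $g=0\in L^2(\sigma)^4$. Since $\phi$ is a fundamental solution of $H$,
\begin{equation*}
\Phi(G+g)=\phi*H(f)\,\mu=f,
\end{equation*}
and since $f$ vanishes in a neighborhood of $\Sigma$, $\Phi_\sigma(G)=\Tr_\sigma(f)=0$. In each of the three cases of Theorem \ref{t1}, the constraint relating $G$ and $g$ reduces to the trivial identity $\Lambda(0)=0$, so $f=\Phi(G+g)\in D(T)$. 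By Corollary \ref{c3}, $V(f)=-g\sigma=0$ and $T(f)=H_V(f)=G=H(f)$. Hence $T$ (and therefore $\overline T$ in case $(ii)$) extends $H|_{\CC^\infty_c(\R^3\setminus\Sigma)^4}$, which is the statement of the corollary.

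There is essentially no obstacle: the framework of Section \ref{s1} was designed precisely so that test functions supported away from $\Sigma$ lie in $D(T)$ via the trivial choice $g=0$ and are unaffected by the measure-valued potential $V$ (which is supported on $\Sigma$). The only work is bookkeeping: checking the dimensional inequality $2s>n-d$, invoking Lemma \ref{l7} for the analytic properties of $\phi$, and noting that the constraint $\Lambda(\Phi_\sigma(G))=g$ (or $\Phi_\sigma(G)=\Lambda(g)$) is automatic when both sides vanish.
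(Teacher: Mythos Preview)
Your proof is correct and follows essentially the same approach as the paper: invoke Lemma \ref{l7} to verify the hypotheses of Theorem \ref{t1} for $L=H$, and then observe that test functions supported away from $\Sigma$ lie in $D(T)$ with $T=H$ there. The paper's own proof is a one-line reference to Lemma \ref{l7} and Remark \ref{r2}, leaving the inclusion $\CC^\infty_c(\R^3\setminus\Sigma)^4\subset D(T)$ implicit (it was already established in part (a) of the proof of Theorem \ref{t1}); you simply make this step explicit.
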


\begin{proof}
Lemma \ref{l7} shows that Theorem \ref{t1} and Remark \ref{r2} can be applied to $L=H$.
\end{proof}

\subsection{The case of Lipschitz surfaces}\label{ss5}
For this section, let $\Sigma$ be the boundary of a bounded Lipschitz domain $\Omega_+\subset\R^3$, let $\sigma$ be the surface measure of $\Sigma$, let $N$ denote the outward unit normal vector field on $\Sigma$ with respect to $\Omega_+$, and set $\Omega_-=\R^3\setminus\overline{\Omega_+}$, so $\Sigma=\partial\Omega_+=\partial\Omega_-$. We keep the notation introduced in Section \ref{s1}, but with $L=H$ and $\phi$ given by Lemma \ref{l7}.

The following lemma is somehow contained in \cite[Theorem 4.4]{McIntosh}, but we state and prove it here for the sake of completeness. We are grateful to Luis Escauriaza for showing us a simple argument to prove Lemma \ref{l5}$(ii)$.

\begin{lema}\label{l5}
Given $g\in L^2(\sigma)^4$ and $x\in\Sigma$, set 
\begin{equation*}
\begin{split}
C_\sigma (g)(x)=\lim_{\epsilon\searrow0}\int_{|x-z|>\epsilon}\phi(x-z)g(z)\,d\sigma(z) 
\quad\text{and}\quad
C_{\pm}(g)(x)=\lim_{\Omega_{\pm}\ni y\stackrel{nt}{\longrightarrow} x}
\Phi(g)(y),
\end{split}
\end{equation*}
where $\textstyle{\Omega_{\pm}\ni y\stackrel{nt}{\longrightarrow} x}$ means that $y\in\Omega_{\pm}$ tends to $x\in\Sigma$ non-tangentially. Then $C_\sigma (g)(x)$ and $C_{\pm}(g)(x)$ exist for $\sigma$-a.e. $x\in\Sigma$, and $C_\sigma,C_\pm: L^2(\sigma)^4\to L^2(\sigma)^4$ are linear bounded operators. Moreover, the following  holds:
\begin{itemize}
\item[$(i)$] $C_\pm =\mp\frac{i}{2}\,(\alpha\cdot N)+C_\sigma$ (Plemelj--Sokhotski jump formulae),
\item[$(ii)$] $-4(C_\sigma(\alpha\cdot N))^2=I_4$.
\end{itemize}
\end{lema}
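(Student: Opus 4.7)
The existence and $L^2(\sigma)^4$-boundedness of $C_\sigma$, together with the $\sigma$-a.e.\ non-tangential existence and $L^2$-boundedness of $C_\pm$, will follow from standard Calder\'on--Zygmund theory on Lipschitz surfaces. My plan for this part is to decompose the fundamental solution as $\phi(x) = K_0(x) + R(x)$, where $K_0(x) = i\,\alpha\cdot x/(4\pi|x|^3)$ is the odd, matrix-valued principal part (homogeneous of degree $-2$) and $R$ satisfies $|R(x)|\leq C|x|^{-1}$ near the origin with exponential decay at infinity. The principal-value truncations of $K_0$ and their non-tangential limits are handled by the Coifman--McIntosh--Meyer theorem on Lipschitz surfaces, while the $R$-contribution is controlled by the weakly singular estimate of Lemma \ref{l1} and, moreover, extends continuously across $\Sigma$; both facts are essentially contained in \cite[Theorem 4.4]{McIntosh}. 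For (i), only $K_0$ produces a jump: localizing at a Lebesgue point $x\in\Sigma$ and bi-Lipschitz flattening $\Sigma$ reduces the computation to a half-space, where the jump equals $\pm\frac{i}{2}(\alpha\cdot N(x))g(x)$ as $y\to x$ non-tangentially from $\Omega_\pm$, which, with $N$ outward from $\Omega_+$, gives exactly $C_\pm = \mp\frac{i}{2}(\alpha\cdot N)+C_\sigma$.

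The heart of the lemma is (ii), and the plan here is an extension-by-zero trick exploiting the boundedness of $\Omega_+$. Fix $g\in L^2(\sigma)^4$, set $u=\Phi(g\sigma)\in L^2(\mu)^4$, and consider $v := u\,\chi_{\Omega_+}\in L^2(\mu)^4$. Since $Hu=g\sigma$ in $\DD^*$ (Lemma \ref{l3}), $u$ solves $Hu=0$ classically on $\Omega_+\cup\Omega_-$, so the classical $H$ of $v$ vanishes off $\Sigma$. Applying the standard identity $\partial_j w|_{\mathrm{dist}} = \partial_j w|_{\mathrm{classical}} - (w_+-w_-)N_j\,\sigma$ componentwise, with $v_+ = C_+(g)$ and $v_- = 0$, yields
\begin{equation*}
Hv = i(\alpha\cdot N)\,C_+(g)\,\sigma\qquad\text{in }\DD^*.
\end{equation*}
By Lemma \ref{l3} again, the function $i\Phi\bigl((\alpha\cdot N)\,C_+(g)\,\sigma\bigr)\in L^2(\mu)^4$ solves the same distributional equation. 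Since $H$ restricted to $W^{1,2}(\mu)^4$ is self-adjoint with spectrum $(-\infty,-m]\cup[m,+\infty)$, for $m>0$ the value $0$ lies in the resolvent set of $H$ and every $L^2$-solution of $Hw=0$ vanishes; thus the two $L^2$-solutions must coincide, giving $v = i\Phi\bigl((\alpha\cdot N)\,C_+(g)\,\sigma\bigr)$.

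Taking the non-tangential limit of this identity from $\Omega_-$ yields $0 = v_- = i\,C_-\bigl((\alpha\cdot N)\,C_+(g)\bigr)$. Substituting $C_- = \frac{i}{2}(\alpha\cdot N)+C_\sigma$ from (i) and using $(\alpha\cdot N)^2 = I_4$ collapses this to the operator identity $C_\sigma(\alpha\cdot N)\,C_+ = -\frac{i}{2}\,C_+$. Inserting $C_+ = -\frac{i}{2}(\alpha\cdot N)+C_\sigma$ on the left and simplifying once more via $(\alpha\cdot N)^2 = I_4$ reduces this to $C_\sigma(\alpha\cdot N)C_\sigma = -\frac{1}{4}(\alpha\cdot N)$, and right-multiplying by $(\alpha\cdot N)$ produces exactly $-4\bigl(C_\sigma(\alpha\cdot N)\bigr)^2 = I_4$. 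The main obstacle is the uniqueness step identifying $v$ with $i\Phi((\alpha\cdot N)C_+(g)\sigma)$: it rests on $0\notin\mathrm{spec}(H)$, a feature specific to the massive case $m>0$. Once that is in place, everything else is purely algebraic, driven entirely by the single identity $(\alpha\cdot N)^2 = I_4$.
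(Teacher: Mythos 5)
Your treatment of the boundedness of $C_\sigma$, the existence of the non-tangential limits, and the jump formulae $(i)$ is in substance the same as the paper's: the paper splits $\phi=\omega_1+\omega_2+\omega_3$ with $\omega_3(x)=\frac{i}{4\pi}\alpha\cdot\frac{x}{|x|^3}$ exactly your $K_0$ and $\omega_1+\omega_2$ your weakly singular remainder $R$, and then quotes the known jump relations for the kernel $\frac{x_j-z_j}{4\pi|x-z|^3}$ on Lipschitz surfaces instead of re-deriving them by flattening; either reference is acceptable. For $(ii)$ your route is genuinely different in presentation, though it proves the same underlying identity. The paper invokes the interior reproducing (Borel--Pompeiu) formula for $H$-monogenic functions on $\Omega_+$, applies it to the single-layer potential $\Phi((i\alpha\cdot N)g)$, and takes boundary values from \emph{inside}; you instead extend $\Phi(g)|_{\Omega_+}$ by zero, compute its distributional image under $H$ via the jump of the normal derivative, identify the extension with $i\Phi((\alpha\cdot N)C_+(g)\sigma)$ by global uniqueness for $H$ on $L^2(\R^3)$, and take boundary values from \emph{outside}. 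Your extension-by-zero plus uniqueness argument is in effect a proof of the reproducing formula that the paper cites, so you trade a black-box citation for an explicit use of the invertibility of the symbol $2\pi\alpha\cdot\xi+m\beta$ when $m>0$; the resulting algebra (substituting $(i)$ and using $(\alpha\cdot N)^2=I_4$) is identical and correct. Two small points to tighten: the uniqueness step should be justified by the Fourier-side computation $(2\pi\alpha\cdot\xi+m\beta)\widehat{w}=0\Rightarrow\widehat{w}=0$ rather than by quoting the spectrum of the self-adjoint realization of $H$, since an $L^2$ distributional solution of $Hw=0$ is not a priori in $W^{1,2}(\mu)^4$ (the two statements are of course equivalent once this is observed); and the distributional identity for $H(u\chi_{\Omega_+})$ requires the $L^2(\sigma)$ control of the non-tangential maximal function of $\Phi(g)$ to pass to the limit in the integration by parts, which is the same ingredient the paper uses to prove its formula (\ref{l5eq5}).
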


\begin{proof}
The first statements of the lemma and $(i)$ are a consequence of the following well known fact (see \cite{Kellogg}, or \cite[page 1071]{Escauriaza} for example). Given $f\in L^2(\sigma)$, then for $\sigma$-a.e. $x=(x_1,x_2,x_3)\in\Sigma$ and for all $j=1,2,3$, 
\begin{equation}\label{l5eq1}
\begin{split}
\lim_{\Omega_{\pm}\ni y\stackrel{nt}{\longrightarrow} x}
\int\frac{y_j-z_j}{4\pi|y-z|^3}\,f(z)\,d\sigma(z)=
\mp\frac{f(x)}{2}\,N_j(x)+
\lim_{\epsilon\searrow0}\int_{|x-z|>\epsilon}\frac{x_j-z_j}{4\pi|x-z|^3}\,f(z)\,d\sigma(z),
\end{split}
\end{equation}
and the integrals in (\ref{l5eq1}) define linear operators which are bounded in $L^2(\sigma)$.

We write
\begin{equation}\label{l5eq2}
\begin{split}
\phi(x)&=\frac{e^{-m|x|}}{4\pi|x|}\,m\left(\beta+i\alpha\cdot\frac{x}{|x|}\right)+\frac{e^{-m|x|}-1}{4\pi}\,i\left(\alpha\cdot\frac{x}{|x|^3}\right)
+\frac{i}{4\pi}\left(\alpha\cdot\frac{x}{|x|^3}\right)\\
&=\omega_1(x)+\omega_2(x)+\omega_3(x).
\end{split}
\end{equation}
For $j=1,2$ and any $1\leq k,l\leq4$, we have 
$|(\omega_j)_{k,l}(x)|=O(|x|^{-1})$ for $|x|\to0$. Using that $\sigma$ is $2$-dimensional and rather standard arguments (essentially, using that $\Sigma$ is bounded, the generalized Young's inequality, and the dominate convergence theorem), it is not hard to show that, for $j=1,2$, 
\begin{equation}\label{l5eq3}
\begin{split}
\lim_{\Omega_{\pm}\ni y\stackrel{nt}{\longrightarrow} x}
\int\omega_j(y-z)g(z)\,d\sigma(z)=
\lim_{\epsilon\searrow0}\int_{|x-z|>\epsilon}\omega_j(x-z)
g(z)\,d\sigma(z)
\end{split}
\end{equation}
for all $g\in L^2(\sigma)^4$ and $\sigma$-a.e. $x\in\Sigma$, and the integrals in (\ref{l5eq3}) define linear operators which are bounded in $L^2(\sigma)^4$.
For the case of $\omega_3$, using (\ref{l5eq1}) we obtain
\begin{equation}\label{l5eq4}
\begin{split}
\lim_{\Omega_{\pm}\ni y\stackrel{nt}{\longrightarrow} x}
&\int\omega_3(y-z)g(z)\,d\sigma(z)
=\lim_{\Omega_{\pm}\ni y\stackrel{nt}{\longrightarrow} x}
i\sum_{j=1}^3\int\frac{y_j-z_j}{4\pi|y-z|^3}\,\alpha_jg(z)\,d\sigma(z)\\
&=i\sum_{j=1}^3\left(\mp\frac{1}{2}\,\alpha_j g(x)N_j(x)+\lim_{\epsilon\searrow0}\int_{|x-z|>\epsilon}\frac{x_j-z_j}{4\pi|x-z|^3}\,\alpha_jg(z)\,d\sigma(z)\right)\\
&=\mp\frac{i}{2}\,(\alpha\cdot N(x)) g(x)+\lim_{\epsilon\searrow0}\int_{|x-z|>\epsilon}\omega_3(x-z)g(z)\,d\sigma(z).
\end{split}
\end{equation}
Then $(i)$ follows by (\ref{l5eq2}), (\ref{l5eq3}), and (\ref{l5eq4}).

In order to prove $(ii)$, recall the following reproducing formula (see \cite[Section 3]{McIntosh}, for example): if $\Omega$ is a bounded Lipschitz domain in $\R^3$ and $f\in\CC^\infty(\Omega)^4$ satisfies $H(f)=0$ in $\Omega$ and has non-tangential boundary values in $L^2(\sigma_\Omega)^4$, then
\begin{equation}\label{l5eq5}
\begin{split}
f(x)=\int_{\partial\Omega}\phi(x-z)(i\alpha\cdot N_\Omega(z))f(z)\,d\sigma_\Omega(z)
\end{split}
\end{equation}
for all $x\in\Omega$, where $N_\Omega$ and $\sigma_\Omega$ are the outward unit normal vector field and surface measure of $\partial\Omega$ respectively. This reproducing formula can be proved using integration by parts on 
\begin{equation*}
\begin{split}
\int_{\Omega\setminus B(x,\epsilon)}H(f)(z)\cdot\overline{\phi(z-x)e_j}\,d\mu(z)\quad\text{for }j=1,2,3,4,
\end{split}
\end{equation*}
and taking $\epsilon\searrow0$, where $e_1=(1,0,0,0),\ldots,e_4=(0,0,0,1)$, and $B(x,\epsilon)$ is the ball centered at $x$ and with radius $\epsilon>0$. 

Let $g\in L^2(\sigma)^4$. Since $H(\Phi((i\alpha\cdot N)g))=0$ in $\Omega_+$, using (\ref{l5eq5}) we have that, for all $x\in\Omega_+$,
\begin{equation}\label{l5eq6}
\begin{split}
\Phi((i\alpha\cdot N)g)(x)
=\Phi\big((i\alpha\cdot N)C_+((i\alpha\cdot N)g)\big)(x).
\end{split}
\end{equation}
By approaching to $\Sigma$ non-tangentially, we deduce from $(i)$ and (\ref{l5eq6}) that 
\begin{equation*}
\begin{split}
\frac{1}{2}\,g+C_\sigma ((i\alpha\cdot N)g)
&=C_+((i\alpha\cdot N)g)
=C_+\big((i\alpha\cdot N)C_+((i\alpha\cdot N)g)\big)\\
&=\frac{1}{2}\,\Big(\frac{1}{2}\,g+C_\sigma ((i\alpha\cdot N)g)\Big)+C_\sigma \Big((i\alpha\cdot N)\Big(\frac{1}{2}\,g+C_\sigma ((i\alpha\cdot N)g)\Big)\Big)\\
&=\frac{1}{4}\,g+C_\sigma((i\alpha\cdot N)g)
-(C_\sigma \big(\alpha\cdot N))^2(g),
\end{split}
\end{equation*}
which proves $(ii)$. Let us mention that, if one argues with $\Omega_-$ and $C_-$ instead of $\Omega_+$ and $C_+$, one obtains the same result. The lemma is finally proved.
\end{proof}

\begin{remark}\label{r3}
Let $\varphi=\Phi(G+g)$ for some $G\mu+g\sigma\in\XX$, and set $\varphi_\pm=\Phi_\sigma(G)+C_\pm(g)$. Since $V(\varphi)=-g\sigma$ by definition (see Corollary \ref{c3}), Lemma \ref{l5}$(i)$ yields
$$V(\varphi)=-i(\alpha\cdot N)(\varphi_+-\varphi_-)\sigma.$$
This is consistent with the fact that, if $\varphi$ is a function which is smooth in $\Sigma^c$ and has a jump at $\Sigma$, then $H(\varphi)=\chi_{\Sigma^c}H(\varphi)\mu-i(\alpha\cdot N)(\varphi_- -\varphi_+)\sigma$ distributionally (this is an easy exercise left for the reader), so that for having $(H+V)(\varphi)\in L^2(\mu)^4$ one needs to take $V(\varphi)=i(\alpha\cdot N)(\varphi_--\varphi_+)\sigma$.
\end{remark}

\begin{lema}\label{l8}
If $\Sigma$ is $\CC^2$, the anticommutator $\{\alpha\cdot N,C_\sigma\}=(\alpha\cdot N)C_\sigma+C_\sigma(\alpha\cdot N)$ is a compact operator on $L^2(\sigma)^4$.
\end{lema}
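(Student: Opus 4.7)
The plan is to reduce the anticommutator to a singular integral with a kernel that becomes weakly singular thanks to the $C^2$ regularity of $\Sigma$, and then invoke the standard compactness of weakly singular integral operators on a bounded $2$-AD regular set.

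First, I would use the decomposition $\phi=\omega_1+\omega_2+\omega_3$ introduced in (\ref{l5eq2}) of the proof of Lemma \ref{l5}, where
\begin{equation*}
\omega_3(x)=\frac{i}{4\pi}\,\alpha\cdot\frac{x}{|x|^3}
\end{equation*}
and $|\omega_j(x)|=O(|x|^{-1})$ as $x\to0$ for $j=1,2$. The operator $T_jg(x)=\int\omega_j(x-z)g(z)\,d\sigma(z)$ has a weakly singular kernel of order $|x-z|^{-1}$ on the bounded $2$-dimensional surface $\Sigma$; a standard truncation argument (apply Schur's test to the kernel supported on $\{|x-z|<\varepsilon\}$ to get a norm bound $O(\varepsilon)$, while the complementary operator is Hilbert--Schmidt) shows that $T_1,T_2$ are compact on $L^2(\sigma)^4$. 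Hence, writing $Rg(x)=\text{p.v.}\int\omega_3(x-z)g(z)\,d\sigma(z)$, we have $C_\sigma=R+K$ with $K$ compact, and since compact operators form a two-sided ideal, $\{\alpha\cdot N,C_\sigma\}=\{\alpha\cdot N,R\}$ modulo compacts. It therefore suffices to show that $\{\alpha\cdot N,R\}$ is compact.

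Next, I would compute the kernel of the anticommutator using the Clifford identity $(\alpha\cdot a)(\alpha\cdot b)+(\alpha\cdot b)(\alpha\cdot a)=2(a\cdot b)I_4$, which follows from $\alpha_i\alpha_j+\alpha_j\alpha_i=2\delta_{ij}I_4$. For $x,z\in\Sigma$ with $x\neq z$, the kernel of $\{\alpha\cdot N,R\}$ is
\begin{equation*}
\frac{i}{4\pi|x-z|^3}\Bigl[(\alpha\cdot N(x))(\alpha\cdot(x-z))+(\alpha\cdot(x-z))(\alpha\cdot N(z))\Bigr].
\end{equation*}
I would write $N(z)=N(x)+(N(z)-N(x))$ and apply the Clifford identity to the pair $(N(x),x-z)$, obtaining the kernel
\begin{equation*}
\frac{i}{4\pi|x-z|^3}\Bigl[2\bigl(N(x)\cdot(x-z)\bigr)I_4+(\alpha\cdot(x-z))\bigl(\alpha\cdot(N(z)-N(x))\bigr)\Bigr].
\end{equation*}

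The final step, which is also the crux of the argument, is to use the $C^2$ regularity of $\Sigma$. Since $\Sigma$ is $C^2$, the unit normal $N$ is $C^1$, so $|N(z)-N(x)|\leq C|x-z|$; moreover, parametrising $\Sigma$ locally as a $C^2$ graph over its tangent plane at $x$ gives the geometric estimate $|N(x)\cdot(x-z)|\leq C|x-z|^2$ (the signed distance from $z$ to the tangent plane at $x$ vanishes to second order along $\Sigma$). Plugging these two estimates into the expression above yields
\begin{equation*}
\Bigl|(\alpha\cdot N(x))K_R(x,z)+K_R(x,z)(\alpha\cdot N(z))\Bigr|\leq\frac{C}{|x-z|},
\end{equation*}
where $K_R$ is the kernel of $R$. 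This is again a weakly singular kernel of order $|x-z|^{-1}$ on the bounded $2$-AD regular set $\Sigma$, so by the same Schur/Hilbert--Schmidt truncation argument used for $T_1,T_2$ above, $\{\alpha\cdot N,R\}$ is compact on $L^2(\sigma)^4$, and consequently so is $\{\alpha\cdot N,C_\sigma\}$.

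The main obstacle is the geometric cancellation at step three: without the $C^2$ hypothesis, one only gets $N(x)\cdot(x-z)=o(|x-z|)$ and $|N(z)-N(x)|=o(1)$, which is not enough to absorb the $|x-z|^{-3}$ singularity. All other steps are essentially soft: the anticommutation of the Dirac matrices does the algebraic work, and the truncation argument for weakly singular kernels on a bounded $2$-dimensional AD-regular set is classical.
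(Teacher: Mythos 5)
Your proof is correct and follows essentially the same route as the paper's: the crux in both is the Clifford anticommutation identity combined with the $\CC^2$ cancellations $|N(x)-N(z)|=O(|x-z|)$ and $|N(x)\cdot(x-z)|=O(|x-z|^2)$, reducing the anticommutator to an operator with a weakly singular kernel $O(|x-z|^{-1})$, which is compact on the bounded surface by the standard truncation/Hilbert--Schmidt argument (the paper cites \cite[Proposition 3.11]{Folland} for this). The only cosmetic difference is that you first peel off $\omega_1,\omega_2$ as separately compact and anticommute only with $\omega_3$, whereas the paper anticommutes $\alpha\cdot N$ with the full kernel $\phi$ at once (using also that the $\alpha_j$ anticommute with $\beta$), arriving at the same kernel estimate.
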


\begin{proof}
Given $x\in\Sigma$ and $y\in\R^3$, a simple computation shows that 
\begin{equation}\label{l8eq2}
(\alpha\cdot N(x))(\alpha\cdot y)
=-(\alpha\cdot y)(\alpha\cdot N(x))+2(N(x)\cdot y)I_4.
\end{equation}
Since the $\alpha_j$'s anticommute with $\beta$, 
(\ref{l8eq2}) yields 
$$(\alpha\cdot N(x))\phi(y)
=-\phi(y)(\alpha\cdot N(x))
+i(2\pi)^{-1}e^{-m|y|}|y|^{-3}(1+m|y|)(N(x)\cdot y)I_4.$$ 
Therefore, for $g\in L^2(\sigma)^4$, 
$\{\alpha\cdot N,C_\sigma\}(g)(x)
=\lim_{\epsilon\searrow0}\int_{|x-z|>\epsilon}
K(x,z)g(z)\,d\sigma(z)$, where
\begin{equation}\label{l8eq1}
\begin{split}
K(x,z)=\phi(x-z)(\alpha\cdot (N(z)-N(x))+
\frac{ie^{-m|x-z|}}{2\pi|x-z|^{3}}\,(1+m|x-z|)(N(x)\cdot(x-z))I_4.
\end{split}
\end{equation}
Since $\Sigma$ is $\CC^2$, it is not hard to show that
$\sup_{1\leq j,k\leq 4}|K_{j,k}(x,z)|=O(|x-z|^{-1})$ when $|x-z|$ tends to zero, because $|N(x)-N(z)|=O(|x-z|)$ and 
$|N(x)\cdot(x-z)|=O(|x-z|^2)$ for $x,z\in\Sigma$ with $|x-z|$ small enough (see \cite[Lemma 3.15]{Folland}, for example). Using this estimate, one can easily adapt the proof of \cite[Proposition 3.11]{Folland} to show that $\{\alpha\cdot N,C_\sigma\}$ is a compact operator.
\end{proof}

\begin{remark}
The $\CC^2$ condition on $\Sigma$ is not sharp, but it is enough for our purposes. One can require less regularity on $\Sigma$ and still obtain compactness of the anticommutator. For example, if $\Sigma$ is $\CC^1$, the methods developed in \cite{Fabes} would work.
\end{remark}

\begin{lema}\label{l9}
Given $\lambda\in\R\setminus\{0\}$, set $\Lambda_{\pm}=1/\lambda \pm C_\sigma$. Then, $\Lambda_{\pm}: L^2(\sigma)^4\to L^2(\sigma)^4$ are linear bounded self-adjoint operators. Moreover, if $\Sigma$ is $\CC^2$ and $\lambda\in\R\setminus\{-2,0,2\}$ then $\Ran(\Lambda_{\pm})$ are closed.
\end{lema}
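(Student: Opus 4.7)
The plan is to establish the three claims in turn: linearity, boundedness, self-adjointness, and finally closed range. The first two are essentially immediate from Lemma~\ref{l5}: $C_\sigma$ is linear and bounded on $L^2(\sigma)^4$, and $\Lambda_\pm=(1/\lambda)I_4\pm C_\sigma$ inherits these properties. For self-adjointness, since $1/\lambda\in\R$ it suffices to show $C_\sigma^*=C_\sigma$, and this follows from property $(ii)$ of $\phi$ in Section~\ref{ss1}, namely $\phi(x-y)=\overline{\phi^t}(y-x)$, combined with a Fubini argument on the truncated kernels $\phi(x-y)\chi_{\{|x-y|>\epsilon\}}$ and passage to the limit $\epsilon\searrow0$ (exactly as in the proof of Lemma~\ref{l4}).

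The real content of the lemma is the closed-range assertion, for which I plan to show that $\Lambda_+\Lambda_-$ is Fredholm and then transfer this to the factors. Set $A=\alpha\cdot N$; using the anticommutation relations of the Dirac matrices and $|N|=1$ one has $A^2=I_4$, and $A$ is self-adjoint as a multiplication operator since each $\alpha_j$ is Hermitian and $N$ is real. The two ingredients I will combine are Lemma~\ref{l5}$(ii)$, which says $(C_\sigma A)^2=-I_4/4$, and Lemma~\ref{l8}, which states that $K:=\{A,C_\sigma\}$ is compact on $L^2(\sigma)^4$ (this is the only place where the $\CC^2$ hypothesis on $\Sigma$ is used).

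Writing $C_\sigma=(C_\sigma A)A$ and $AC_\sigma=K-C_\sigma A$ one obtains
\[
C_\sigma^2=(C_\sigma A)(AC_\sigma)=(C_\sigma A)K-(C_\sigma A)^2=(C_\sigma A)K+\tfrac14 I_4,
\]
and since $C_\sigma A$ is bounded, $(C_\sigma A)K$ is compact. Consequently
\[
\Lambda_+\Lambda_-=\tfrac{1}{\lambda^2}I_4-C_\sigma^2=\frac{4-\lambda^2}{4\lambda^2}\,I_4-(C_\sigma A)K,
\]
which for $\lambda\in\R\setminus\{-2,0,2\}$ is a nonzero scalar multiple of the identity plus a compact operator, hence Fredholm on $L^2(\sigma)^4$ by the Fredholm alternative.

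To finish, I would exploit that $\Lambda_+$ and $\Lambda_-$ commute, so $\Lambda_+\Lambda_-=\Lambda_-\Lambda_+$. Taking a parametrix $P$ with $P(\Lambda_+\Lambda_-)=I-K_1$, $K_1$ compact, one has $(P\Lambda_\mp)\Lambda_\pm=P(\Lambda_\mp\Lambda_\pm)=P(\Lambda_+\Lambda_-)=I-K_1$, which exhibits a left parametrix for each $\Lambda_\pm$. This gives finite-dimensional kernel and, crucially, closed range for both operators. The main obstacle is spotting the decomposition $C_\sigma^2=\tfrac14 I_4+\text{compact}$: this is where the product structure $(C_\sigma A)(AC_\sigma)$ together with Lemma~\ref{l5}$(ii)$ and Lemma~\ref{l8} must be orchestrated simultaneously; once this identity is isolated, the critical values $\lambda=\pm2$ appear transparently as the zeros of the scalar factor $(4-\lambda^2)/(4\lambda^2)$, and the rest is bookkeeping with the Fredholm alternative.
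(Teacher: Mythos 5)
Your proposal is correct, and up to the last step it coincides with the paper's proof: the same identity $\Lambda_+\Lambda_-=\Lambda_-\Lambda_+=\bigl(1/\lambda^2-1/4\bigr)I_4-C_\sigma(\alpha\cdot N)\{\alpha\cdot N,C_\sigma\}$, obtained from Lemma~\ref{l5}$(ii)$ and Lemma~\ref{l8}, is the engine in both arguments, and the critical values $\lambda=\pm2$ appear in both as the zeros of the scalar $1/\lambda^2-1/4$. Where you diverge is in transferring closedness of the range from the product to the factors. You invoke the abstract semi-Fredholm characterization: since $\Lambda_\mp$ commutes with $\Lambda_\pm$, composing a parametrix $P$ of $\Lambda_+\Lambda_-$ with $\Lambda_\mp$ yields a left inverse of $\Lambda_\pm$ modulo compacts, hence finite-dimensional kernel and closed range (Atkinson-type theorem). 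The paper instead argues by hand: given $f\in\overline{\Ran(\Lambda_+)}$, it shows $\Lambda_-(f)\in\Ker(\Lambda_+\Lambda_-)^{\bot}=\Ran(\Lambda_-\Lambda_+)$ (using self-adjointness of the product and its already-established closed range), writes $\Lambda_-(f)=\Lambda_-\Lambda_+(g)$, and then exploits the special identity $\Lambda_++\Lambda_-=2/\lambda$ to convert the resulting relation $f-\Lambda_+(g)\in\Ker(\Lambda_-)$ into an explicit preimage $f=\Lambda_+\bigl(g+\tfrac{\lambda}{2}(f-\Lambda_+(g))\bigr)$. Your route is shorter and more general (it only needs commutativity and Fredholmness of the product, not the affine relation between $\Lambda_+$ and $\Lambda_-$), at the cost of citing the upper semi-Fredholm criterion as a black box; the paper's route is entirely self-contained and in addition produces the explicit preimage, but leans on the specific structure $\Lambda_++\Lambda_-=2/\lambda$. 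Both are valid.
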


\begin{proof}
That $\Lambda_\pm$ are bounded and self-adjoint follow essentially by Lemma \ref{l5} and $(ii)$ in Section \ref{ss1}, we omit the details. It remains to check that $\Ran(\Lambda_{+})$ is closed when $\Sigma$ is a $\CC^2$ surface and $\lambda\in\R\setminus\{-2,0,2\}$, the proof for $\Ran(\Lambda_{-})$ is analogous. 

Recall that $(C_\sigma(\alpha\cdot N))^2=-1/4$ by Lemma \ref{l5}$(ii)$, and $(\alpha\cdot N)^2=I_4$, so
\begin{equation}\label{l9eq1}
\begin{split}
\Lambda_+\Lambda_-
&=\Lambda_-\Lambda_+
=1/\lambda^2-C_\sigma^2
=1/\lambda^2-1/4-C_\sigma(\alpha\cdot N)\{\alpha\cdot N,C_\sigma\}=a-K,
\end{split}
\end{equation}
where $a=1/\lambda^2-1/4$ and $K=C_\sigma(\alpha\cdot N)\{\alpha\cdot N,C_\sigma\}$. Since $C_\sigma(\alpha\cdot N)$ is bounded, $K$ is a compact operator by Lemma \ref{l8}, thus $\Ran(a-K)$ is closed for all $a\in\R\setminus\{0\}$ (i.e, for all $\lambda\in\R\setminus\{-2,0,2\}$) by Fredholm's theorem (see, \cite[Theorem 0.38$(c)$]{Folland}, for example). Furthermore, (\ref{l9eq1}) shows that $K$ is self-adjoint, since $\Lambda_\pm$ are self-adjoint and commute.

Given $f\in\overline{\Ran(\Lambda_+)}$, there exist $g_j\in L^2(\sigma)^4$ with $j\in\N$ such that $f=\lim_ {j\to\infty}\Lambda_+(g_j)$. Then, for any $h\in\Ker(\Lambda_+\Lambda_-)$,  
\begin{equation*}
\begin{split}
\langle \Lambda_-(f),h\rangle_\sigma
=\langle f,\Lambda_-(h)\rangle_\sigma
=\lim_ {j\to\infty}\langle \Lambda_+(g_j),\Lambda_-(h)\rangle_\sigma
=\lim_ {j\to\infty}\langle g_j,\Lambda_+\Lambda_-(h)\rangle_\sigma=0,
\end{split}
\end{equation*}
thus $\Lambda_-(f)\in\Ker(\Lambda_+\Lambda_-)^\bot$. Using (\ref{l9eq1}) and that $a-K$ has closed range for all $a\neq0$, we have $\Ker(\Lambda_+\Lambda_-)^\bot
=\overline{\Ran(\Lambda_-\Lambda_+)}=\Ran(a-K)$, so there exists 
$g\in L^2(\sigma)^4$ such that $\Lambda_-(f)=(a-K)g=\Lambda_-\Lambda_+(g)$, which yields $f-\Lambda_+(g)\in\Ker(\Lambda_-)$. Notice that $\Lambda_++\Lambda_-=2/\lambda$, hence
$2\lambda^{-1}\,(f-\Lambda_+(g))=\Lambda_+(f-\Lambda_+(g))$, which implies that 
$$f=\Lambda_+\left(g+\frac{\lambda}{2}\,(f-\Lambda_+(g))\right)\in\Ran(\Lambda_+).$$
Therefore, $\Ran(\Lambda_+)$ is closed and the lemma is proved.
\end{proof}

\begin{teo}\label{t2}
Assume that $\Sigma$ is $\CC^2$. Given $\lambda\in\R$, let $T$ be the operator defined by
$$D(T)=\big\{u+\Phi(g): u\in W^{1,2}(\mu)^4,\,g\in L^2(\sigma)^4,\,\lambda\Tr_\sigma(u)=-(1+\lambda C_\sigma)(g)\big\}$$
and $T=H+V_\lambda$ on $D(T)$, where 
$$V_\lambda(\varphi)=\frac{\lambda}{2}(\varphi_++\varphi_-)\sigma$$ and 
$\varphi_\pm=\Tr_\sigma(u)+C_\pm (g)$ for $\varphi=u+\Phi(g)\in D(T)$. 
If $\lambda\neq\pm2$, then $T:D(T)\subset L^2(\mu)^4\to L^2(\mu)^4$ is self-adjoint. 

There exists a finite or countable sequence $\{\lambda_j\}_{j\in J}\subset(0,\infty)$ depending only on $\sigma$ and $m>0$, and whose unique possible accumulation point is $2$, such that the following holds:
\begin{itemize}
\item[$(i)$] If $|\lambda|\neq\lambda_j$ for all $j\in J$ and $\varphi\in D(T)$ is such that $T(\varphi)=0$, then $\varphi=0$.
\item[$(ii)$]  If $|\lambda|=\lambda_j$ for some $j\in J$, there exist a non-trivial $\varphi=\Phi(g)$ with $g\in L^2(\sigma)^4$ such that either $$(H+V_\lambda)(\varphi)=0\quad\text{or}\quad(H+V_{-\lambda})(\varphi)=0.$$
In particular, if $\sigma=s_\#\sigma$ then there exists a non-trivial $\varphi\in D(T)$ such that $T(\varphi)=0$, where $s(x)=-x$ for $x\in\R^3$ and $s_\#\sigma$ is the image measure of $\sigma$ with respect to $s$.
\end{itemize}
\end{teo}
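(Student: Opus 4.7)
The plan is to recast $T$ as an instance of Theorem \ref{t1} and then read off its kernel from the compact-operator structure of $C_\sigma$. Using Lemma \ref{l5}$(i)$, $\tfrac12(\varphi_+ + \varphi_-)=\Tr_\sigma(u)+C_\sigma(g)$, so equating $V_\lambda(\varphi)=\lambda(\Tr_\sigma(u)+C_\sigma(g))\sigma$ with the requirement $V(\varphi)=-g\sigma$ of Corollary \ref{c3} gives precisely
\begin{equation*}
\Tr_\sigma(u)=-\bigl(1/\lambda+C_\sigma\bigr)(g)=-\Lambda_+(g).
\end{equation*}
Setting $\Lambda:=-\Lambda_+$ (bounded self-adjoint on $L^2(\sigma)^4$ by Lemma \ref{l9}) and writing $u=\Phi(G)$ with $G=H(u)$ so $\Tr_\sigma(u)=\Phi_\sigma(G)$, this is exactly the domain of Theorem \ref{t1}$(ii)$. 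The case $\lambda=0$ is immediate since $V_0\equiv 0$ and $D(T)=W^{1,2}(\mu)^4$.

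For $\lambda\in\R\setminus\{0,\pm2\}$, Lemma \ref{l9} gives that $\Ran(\Lambda)$ is closed, so Theorem \ref{t1}$(ii)$ yields that $\overline T$ is self-adjoint. To strengthen this to self-adjointness of $T$ via Theorem \ref{t1}$(iii)$, I would show $\Phi(\Ker(\Lambda))$ is closed. The factorisation $\Lambda_+\Lambda_-=(1/\lambda^2-1/4)I_4-K$ from the proof of Lemma \ref{l9}, where $K=C_\sigma(\alpha\cdot N)\{\alpha\cdot N,C_\sigma\}$ is compact (being the product of a bounded operator and the compact $\{\alpha\cdot N,C_\sigma\}$ from Lemma \ref{l8}), together with $1/\lambda^2-1/4\neq 0$ and Fredholm's theorem, makes $\Ker(\Lambda_+\Lambda_-)\supset\Ker(\Lambda_+)$ finite-dimensional. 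Because $\Phi$ is injective on $L^2(\sigma)^4$ (from $H\Phi(h)=h\sigma$), $\Phi(\Ker(\Lambda_+))$ is finite-dimensional, hence closed, and $T$ is self-adjoint.

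For the kernel, Remark \ref{r2} gives $T(u+\Phi(g))=H(u)$, and because $H$ restricted to $W^{1,2}(\mu)^4$ has $L^2$-spectrum $(-\infty,-m]\cup[m,\infty)$, $T(\varphi)=0$ forces $u=0$; the boundary relation then reduces to $g\in\Ker(\Lambda_+)$, so $\Ker(T)=\Phi(\Ker(\Lambda_+))$. From $\Lambda_+\Lambda_-=(1/\lambda^2-1/4)I_4-K$, the self-adjointness and compactness of $K$, and the commutation $\Lambda_+\Lambda_-=\Lambda_-\Lambda_+$, one sees that $\Ker(\Lambda_+)\neq\{0\}$ or $\Ker(\Lambda_-)\neq\{0\}$ precisely when $1/\lambda^2-1/4$ equals a nonzero eigenvalue $\mu_k$ of $K$. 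Enumerating those $\mu_k>-1/4$ (which depend only on $\sigma$ and $m$ and, by compactness of $K$, accumulate only at $0$) and defining $\lambda_j>0$ by $\lambda_j^2=4/(1+4\mu_k)$ gives $\lambda_j\to 2$. This proves $(i)$. For $(ii)$, any nontrivial $g\in\Ker(\Lambda_-)$ yields $\Phi(g)$ annihilated by $H+V_{-\lambda}$, since the $\Lambda_+$ associated to parameter $-\lambda$ equals $-1/\lambda+C_\sigma$, i.e.\ $-\Lambda_-$.

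Finally, for the symmetry statement I would produce an isometric bijection $J$ of $L^2(\sigma)^4$ intertwining $C_\sigma$ with $-C_\sigma$, so that $J$ exchanges $\Ker(\Lambda_+)$ and $\Ker(\Lambda_-)$ and both are nontrivial whenever either is. My candidate is $J(g)(x)=\alpha_1\alpha_2\alpha_3\,g(-x)$, whose verification rests on two algebraic facts: the identity $\phi(-w)=\beta\phi(w)\beta$ (a consequence of $\alpha_j\beta=-\beta\alpha_j$), and the observation that among squarefree monomials in $\{\alpha_1,\alpha_2,\alpha_3,\beta\}$ the element $\alpha_1\alpha_2\alpha_3\beta$ is (up to a scalar) the unique matrix anticommuting with all four generators. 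Combined with the change of variables $z=-y$ in the integral defining $(C_\sigma J)(g)$ (legal because $s_\#\sigma=\sigma$), these facts give $C_\sigma J=-JC_\sigma$, so $J(\Ker(\Lambda_-))=\Ker(\Lambda_+)$ and one obtains a nontrivial $\varphi=\Phi(Jg)\in D(T)$ with $T(\varphi)=0$. I expect this last algebraic intertwining---selecting the correct Clifford prefactor and checking it produces the desired sign-reversing relation with $C_\sigma$---to be the most delicate part of the proof.
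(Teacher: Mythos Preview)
Your proposal is correct and follows essentially the same route as the paper: recast the domain via $\Lambda=-\Lambda_+=-(1/\lambda+C_\sigma)$, invoke Lemma~\ref{l9} and the compact factorisation $\Lambda_+\Lambda_-=a-K$ to get finite-dimensionality of $\Ker(\Lambda)$ and hence Theorem~\ref{t1}$(iii)$, and then read off the kernel structure from the eigenvalues of $K$. Your intertwiner $J(g)(x)=\alpha_1\alpha_2\alpha_3\,g(-x)$ is precisely the paper's choice up to a scalar: a direct computation with the Pauli matrices gives $\alpha_1\alpha_2\alpha_3=i\tau$ where $\tau=\left(\begin{smallmatrix}0&I_2\\ I_2&0\end{smallmatrix}\right)$ is exactly the matrix the paper uses, and your relation $C_\sigma J=-JC_\sigma$ is equivalent to the paper's identity $-\phi(z)\tau=\tau\phi(-z)$ (so the ``delicate'' step you flag is in fact a one-line verification).

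Two small remarks. First, your justification that $u=0$ when $T(\varphi)=0$ via the spectral gap of $H$ on $W^{1,2}(\mu)^4$ is fine, but the paper does it more directly: by Remark~\ref{r2}, $T(u+\Phi(g))=G$ where $u=\Phi(G)$, so $G=0$ forces $u=0$ without any spectral input. Second, for the dichotomy in $(ii)$ the paper uses the elementary identity $\Lambda_++\Lambda_-=2/\lambda$ to pass from a nonzero $f\in\Ker(a-K)$ to a nonzero element of $\Ker(\Lambda_+)$ or $\Ker(\Lambda_-)$; your phrasing via commutation also works (restrict $\Lambda_-$ to the finite-dimensional $\Ker(a-K)$ and note its image lies in $\Ker(\Lambda_+)$), but you may want to make that step explicit.
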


\begin{proof}
We are going to prove first that $T$ is self-adjoint for all $\lambda\neq\pm2$.
If $\lambda=0$ then $D(T)=W^{1,2}(\mu)^4$ and $V_\lambda=0$, so we recover the classical self-adjointness of the free Dirac operator $H$, for example by applying Theorem \ref{t1}$(i)$ and Remark \ref{r2} with $L=H$ and $\Lambda=0$. Hence, $T$ is self-adjoint for $\lambda=0$.

Assume that $\lambda\neq0$, and set $\Lambda=-(1/\lambda+C_\sigma)$. Then, using Remark \ref{r2}, we have 
\begin{equation}\label{t2eq2}
D(T)=\big\{\Phi(G+g): G\mu+g\sigma\in\XX,\,\Phi_\sigma(G)=\Lambda (g)\big\}\subset L^2(\mu)^4.
\end{equation}
Let $\varphi=\Phi(G+g)\in D(T)$, then 
$\varphi_\pm=\Phi_\sigma(G)\mp\frac{i}{2}\,(\alpha\cdot N)g
+C_\sigma(g)$ by Lemma \ref{l5}$(i)$, so 
\begin{equation}\label{t2eq1}
V_\lambda(\varphi)
=\lambda(\Phi_\sigma(G)+C_\sigma(g))\sigma
=\lambda(\Lambda(g)+C_\sigma(g))\sigma
=-g\sigma,
\end{equation}
thus $V_\lambda$ restricted to $D(T)$ coincides with the potential $V$ introduced in Corollary \ref{c3}, and $T:D(T)\to L^2(\mu)^4$. Moreover, if $\lambda\neq\pm2$ then $\Lambda$ is a linear bounded self-adjoint operator with $\Ran(\Lambda)$ closed, by Lemma \ref{l9}.
Arguing as in (\ref{l9eq1}), if we set $\Lambda_+=-\Lambda$ and $\Lambda_-=1/\lambda-C_\sigma$, we have 
$$\Lambda_+\Lambda_-=\Lambda_-\Lambda_+=a-K$$ 
with $a=1/\lambda^2-1/4$ and $K=C_\sigma(\alpha\cdot N)\{\alpha\cdot N,C_\sigma\}$. We already know from the proof of Lemma \ref{l9} that $K$ is bounded, compact, and self-adjoint, thus the eigenvalues of $K$ form a finite or countable bounded sequence $\{a_j\}_{j\in J'}\subset\R$ whose unique possible accumulation point is $0$, by Fredholm's Theorem (see \cite[Theorem 0.38$(a)$]{Folland}, for example). Furthermore, \cite[Theorem 0.38$(a)$]{Folland} also gives that $\Ker(a-K)$ has finite dimension for all $a\neq0$, that is for all $\lambda\neq\pm2$. Since $-a+K=\Lambda_-\Lambda$, then $\Ker(\Lambda)$ must be finite dimensional, and this easily implies that $\{{\Phi(h)}:\,h\in\Ker(\Lambda)\}$ is closed in $L^2(\mu)^4$. Therefore, Theorem \ref{t1}$(iii)$ shows that $T$ is self-adjoint for all $\lambda\neq -2, 0,2$.

In order to prove the second part of the theorem, take $\lambda_j=2(1+4a_j)^{-1/2}$ whenever $a_j>-1/4$, and notice that 
$\{\lambda_j\}_{j\in J}$ can only accumulate at $2$. Concernig $(i)$, assume that $\varphi=u+\Phi(g)\in D(T)$ is such that $T(\varphi)=0$ (notice that $g=0$ if $\lambda=0$, by the definition of $D(T)$ in the statement of the theorem). By Remark \ref{r2}, we may assume that $u=\Phi(G)$ for some $G\in L^2(\mu)^4$, and (\ref{t2eq1}) yields $$0=T(\varphi)=(H+V)(\Phi(G+g))=G,$$ so actually $\varphi=\Phi(g)$ (and we are done if $\lambda=0$). From the choice of $\lambda_j$, we already know that if $|\lambda|\neq\lambda_j$ for all $j\in J$ then $a\not\in\{a_j\}_{j\in J}$, and hence $\Ker(\Lambda)\subset\Ker(-a+K)=\{0\}$. Since $\varphi\in D(T)$, by (\ref{t2eq2}) we must have $0=\Phi_\sigma(G)=\Lambda(g)$, and since $\Lambda$ is injective, we conclude that $g=0$. This proves of $(i)$.

Let us now prove the first part of $(ii)$. If $|\lambda|=\lambda_j$ for some $j\in J$ (in particular, $\lambda\neq0$) then $a\in\{a_j\}_{j\in J}$, so $a$ is an eigenvalue of $K$ and we can pick $0\neq f\in L^2(\sigma)^4$ such that 
\begin{equation}\label{t2eq3}
(\Lambda_+\Lambda_-)(f)=(\Lambda_-\Lambda_+)(f)=(-a+K)(f)=0.
\end{equation}
Recall that $\Lambda_++\Lambda_-=2/\lambda$, which means that either $\Lambda_+(f)\neq0$ or $\Lambda_-(f)\neq0$. If $\Lambda_-(f)\neq0$, by setting $g=\Lambda_-(f)$, (\ref{t2eq3}) gives $\Lambda(g)=-\Lambda_+(g)=0$. Using (\ref{t2eq2}), we have $\Phi(g)\in D(T)$ and, moreover, $T(\Phi(g))=(H+V)(\Phi(g))=0$, so we are done. Assuming now that $\Lambda_+(f)\neq0$, set $g=\Lambda_+(f)$ and $\varphi=\Phi(g)$. Then $0=(\Lambda_-\Lambda_+)(f)=\Lambda_-(g)=(1/\lambda-C_\sigma)(g)$ by (\ref{t2eq3}), so
\begin{equation*}
V(\varphi)=-g\sigma=-\lambda C_\sigma(g)\sigma
=-\frac{\lambda}{2}(\varphi_++\varphi_-)\sigma=V_{-\lambda}(\varphi),
\end{equation*}
and therefore $(H+V_{-\lambda})(\varphi)=0$. 

Finally, we are going to prove the last statement of $(ii)$, so we assume that $\sigma=s_\#\sigma$. As we have already seen, if $\Lambda_-(f)\neq0$ then we can find a non-trivial $\varphi\in D(T)$ such that $T(\varphi)=0$. So assume now that $\Lambda_-(f)=0$ and set $g=-\tau\Lambda_+(f)\circ s$, where 
$$\tau
=\left(\begin{array}{cc} 0 & I_2\\
I_2 & 0 \end{array}\right).$$
Notice that $g\neq0$ because $\Lambda_+(f)\neq0$ and, since $\sigma=s_\#\sigma$, we have $g\in L^2(\sigma)^4$.
It is straightforward to check that 
$-\phi(z)\tau=\tau\phi(-z)$ for all $z\in\R^3\setminus\{0\}$. Therefore,
\begin{equation}\label{t2eq5}
\begin{split}
C_\sigma(g)(x)&=\lim_{\epsilon\searrow0}\int_{|x-y|>\epsilon}-\phi(x-y)\tau \Lambda_+(f)(-y)\,d\sigma(y)\\
&=\tau\lim_{\epsilon\searrow0}\int_{|x-y|>\epsilon}\phi(-x+y)\Lambda_+(f)(-y)\,ds_\#\sigma(y)\\
&=\tau\lim_{\epsilon\searrow0}\int_{|x+y|>\epsilon}\phi(-x-y) \Lambda_+(f)(y)\,d\sigma(y)
=\tau C_\sigma(\Lambda_+(f))(-x).
\end{split}
\end{equation}
Recall from (\ref{t2eq3}) that $(\Lambda_-\Lambda_+)(f)=0$, so
\begin{equation}\label{t2eq6}
\begin{split}
\tau C_\sigma(\Lambda_+(f))
&=-\tau(\lambda^{-1}-C_\sigma)(\Lambda_+(f))
+\lambda^{-1}\tau\Lambda_+(f)\\
&=-\tau(\Lambda_-\Lambda_+)(f)+\lambda^{-1}\tau\Lambda_+(f)
=\lambda^{-1}\tau\Lambda_+(f).
\end{split}
\end{equation}
By (\ref{t2eq5}) and (\ref{t2eq6}), we have $C_\sigma(g)=-\lambda^{-1}g$, 
which means that $\Lambda(g)=-(1/\lambda+C_\sigma)(g)=0$. Hence $\Phi(g)\in D(T)$, and $T(\Phi(g))=0$ by (\ref{t2eq1}). The theorem is finally proved.
\end{proof}

\begin{remark}
Despite the domain $D(T)$ appearing in Theorem \ref{t2} {\em a priori} depends on $m>0$ (since it is defined in terms of $\phi$), a straightforward application of the Kato-Rellich theorem to the self-adjoint operator $H+V_\lambda$ given by Theorem \ref{t2} and the symmetric bounded operator $m_0\beta$ (for any given $m_0>0$) shows that actually $D(T)$ is independent of $m$ (see \cite[Theorem X.12]{RS}, for example). This could also be verified directly on the domain by working with the operator $C_\sigma$.
\end{remark}

The next proposition contains some particularities concerning Theorem \ref{t2} in the case that $\Sigma$ is a plane or a sphere.

\begin{propo}\label{p2}
Let $T$ be as in Theorem \ref{t2}.
\begin{itemize} 
\item[$(i)$] Assume that $\Sigma=\R^2\times\{0\}\subset\R^3$. Then the following hold:
\begin{itemize}
\item[$(a)$] If $\lambda\neq\pm2$ then $T$ is self-adjoint and, if $\varphi\in D(T)$ satisfies $T(\varphi)=0$, then $\varphi=0$.
\item[$(b)$] If $\lambda=\pm2$ then $T$ is essentially self-adjoint.  Moreover, $D(\overline T)=D(T)+\Phi(X)$ and $\overline T(\Phi(X))=0$, where $X$ is the completion of $\ker(1/\lambda+C_\sigma)$ with respect to the norm
$$\|h\|^2=\langle|S|^{-1}(h),h\rangle_\sigma,\quad\text{where}\quad S=\alpha_3(\alpha_1\partial_{x_1}+\alpha_2\partial_{x_2}
+im\beta).$$
In particular $D(T)\subsetneq D(\overline T)$.
In addition, for $\lambda=\pm2$ there exists a non-trivial $\varphi\in D(T)$ such that $T(\varphi)=0$.
\end{itemize}
\item[$(ii)$] Assume that $\Sigma=\{x\in\R^3:\, |x|=1\}\subset\R^3$. Then 
there exists some $\lambda_j\neq2$, where $\{\lambda_j\}_{j\in J}$ is the sequence given by Theorem \ref{t2}. In particular, $\{\alpha\cdot N,C_\sigma\}$ is not identically zero.
\end{itemize}
\end{propo}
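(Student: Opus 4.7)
The plan is to handle the two parts by rather different techniques. For $(i)$, the plane case, Fourier analysis in the tangential variables $(x_1,x_2)$ reduces everything to multipliers that can be diagonalized explicitly. For $(ii)$, the sphere case, spherical symmetry must be invoked together with an explicit non-symmetric test function to rule out the degenerate algebraic behavior of the plane.

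For $(i)(a)$ I would first compute the Fourier transform in $(x_1,x_2)$ of $\Phi(g)$ for $g\in L^2(\sigma)^4\simeq L^2(\R^2)^4$. Using the formula $\FF(\phi)(\xi)=(4\pi^2|\xi|^2+m^2)^{-1}(2\pi\alpha\cdot\xi+m\beta)$ from the proof of Lemma \ref{l7}, an integration in $\xi_3$ by residues gives a closed form for $C_\sigma$ as an explicit Fourier multiplier in $(\xi_1,\xi_2)$. The crucial algebraic output is the identity $\{\alpha\cdot N,C_\sigma\}=0$ mentioned in the introduction (with $N=(0,0,1)$), which combined with Lemma \ref{l5}$(ii)$ yields $C_\sigma^2=\tfrac14\,I_4$. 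Since $C_\sigma$ is self-adjoint and satisfies $(2C_\sigma)^2=I_4$, one obtains the orthogonal decomposition $L^2(\sigma)^4=E_{1/2}\oplus E_{-1/2}$ into the $\pm\tfrac12$ eigenspaces. For $\lambda\neq\pm2$ the operator $\Lambda=-(1/\lambda+C_\sigma)$ then acts as $-(1/\lambda\pm 1/2)\,\Id$ on $E_{\pm1/2}$, so $\Ran(\Lambda)=L^2(\sigma)^4$ is closed and $\Ker(\Lambda)=\{0\}$. Theorem \ref{t1}$(iii)$ gives self-adjointness, and if $T(\varphi)=0$ with $\varphi=\Phi(G+g)\in D(T)$ then Corollary \ref{c3} forces $G=0$ and the constraint $\Lambda(g)=\Phi_\sigma(0)=0$ forces $g=0$, so $\varphi=0$.

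For $(i)(b)$ with $\lambda=\pm2$ the same decomposition yields $\Ker(\Lambda)=E_{\mp1/2}$ and $\Ran(\Lambda)=E_{\pm1/2}$, which is \emph{still} closed (this is the special feature of the plane). Hence Theorem \ref{t1}$(ii)$ applies, giving essential self-adjointness with $D(\overline T)=D(T)+D'$ where $D'$ is the $L^2(\mu)^4$-closure of $\{\Phi(h):h\in\Ker(\Lambda)\}$, and $\overline T(D')=\{0\}$. Non-triviality of the zero-mode in $D(T)$ is immediate: pick any non-zero $g\in E_{\mp1/2}$, then $\Phi_\sigma(0)=0=\Lambda(g)$, so $\Phi(g)\in D(T)$ and $T(\Phi(g))=0$ by (\ref{t2eq1}). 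To identify $D'$ with $\Phi(X)$ I would compute, via Plancherel applied to the expression $\widehat{\Phi(h)}(\xi)=M(\xi)\widehat h(\xi_1,\xi_2)$ and the elementary integral $\int(4\pi^2|\xi|^2+m^2)^{-1}\,d\xi_3=(2\sqrt{4\pi^2(\xi_1^2+\xi_2^2)+m^2})^{-1}$, the identity
\begin{equation*}
\|\Phi(h)\|_{\mu}^{2}=\tfrac{1}{2}\,\langle|S|^{-1}(h),h\rangle_{\sigma},
\end{equation*}
where $S^2=m^2-\Delta_{\R^2}$ is obtained by squaring $S=\alpha_3(\alpha_1\partial_{x_1}+\alpha_2\partial_{x_2}+im\beta)$ using the Clifford relations. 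This identifies $D'$ as the image under $\Phi$ of the completion $X$ of $\Ker(1/\lambda+C_\sigma)$ in the $|S|^{-1}$-norm, and shows $D(T)\subsetneq D(\overline T)$ since $X\supsetneq \Ker(\Lambda)\cap L^2(\sigma)^4$ strictly (any compactly supported $h$ with Fourier transform decaying like $|\xi|^{-1/2}$ at infinity lies in $X\setminus L^2(\sigma)^4$).

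For $(ii)$, by the proof of Lemma \ref{l9} the sequence $\{\lambda_j\}$ corresponds to the non-negative eigenvalues $a_j>-1/4$ of the compact self-adjoint operator $K=C_\sigma(\alpha\cdot N)\{\alpha\cdot N,C_\sigma\}$ via $\lambda_j=2(1+4a_j)^{-1/2}$, so $\lambda_j=2$ exactly when $a_j=0$. It therefore suffices to show that $K$ has a non-zero eigenvalue, equivalently that $\{\alpha\cdot N,C_\sigma\}$ is not identically zero on $L^2(\sigma)^4$; indeed, if that anticommutator vanished, the computation done in part $(i)$ would give $C_\sigma^2=\tfrac14$ on $L^2(\sigma)^4$, and since $C_\sigma^2=\tfrac14+K\cdot(\alpha\cdot N)^{-1}\cdots$ (rewrite via Lemma \ref{l5}$(ii)$), we would get $K\equiv0$. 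The cleanest way to refute this on the sphere is to exhibit a single $g\in L^2(S^2)^4$ with $\{\alpha\cdot N,C_\sigma\}(g)\neq0$: I would take a constant spinor $g\equiv e_1$ and compute $C_\sigma(g)$ and $(\alpha\cdot N)C_\sigma(g)+C_\sigma((\alpha\cdot N)g)$ using the explicit kernel of $\phi$, exploiting that $N(x)=x$ on $S^2$, and the $O(|x-z|)$ estimates in (\ref{l8eq1}), to see that the result is not the zero function. The main obstacle here is the explicit spherical computation—one would likely appeal either to the spherical-harmonic decomposition of $C_\sigma$ done in \cite{Dittrich}, \cite{Naiara} (where non-trivial zero modes of $H+V_\lambda$ appear for values of $\lambda$ distinct from $\pm2$) or to a direct estimate of the boundary term $N(x)\cdot(x-z)$ on $S^2$, which is positive and of order $|x-z|^2$ and hence leaves a non-vanishing principal contribution in (\ref{l8eq1}).
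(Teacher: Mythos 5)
Part $(i)$ of your proposal is sound and follows the same skeleton as the paper's proof: the constancy of $N$ kills the anticommutator, hence $C_\sigma^2=\tfrac14 I_4$, $\Lambda$ is invertible for $\lambda\neq\pm2$ (so Theorem \ref{t1}$(iii)$ applies), and $\Lambda_\pm$ are complementary orthogonal projections for $\lambda=\pm2$ (so Theorem \ref{t1}$(ii)$ applies). Your route to the identity $\|\Phi(h)\|_\mu^2=\tfrac12\langle|S|^{-1}h,h\rangle_\sigma$ is genuinely different and arguably cleaner than the paper's: you obtain it for \emph{all} $h$ directly from Plancherel and $\FF(\phi)(\xi)^2=(4\pi^2|\xi|^2+m^2)^{-1}I_4$, whereas the paper derives it only on $\Ker(\Lambda)$ via the evolution equation $\partial_{x_3}\psi=-S(\psi)$ and the projections $P_\pm$. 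The one point you assert without proof is that the completion $X$ strictly contains $\Ker(\Lambda)$: for this you need the fiberwise kernel of the multiplier $\FF(\Lambda)(\xi_1,\xi_2)$ to be nontrivial (in fact two-dimensional) at \emph{every} frequency, which the paper checks by computing the symbol explicitly. Your parenthetical about a ``compactly supported $h$ with Fourier transform decaying like $|\xi|^{-1/2}$'' does not supply this (such an $h$ has entire Fourier transform, and in any case it must be constrained to lie fiberwise in $\Ker(\FF(\Lambda))$).

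Part $(ii)$ has a genuine gap, in two respects. First, the reduction is not valid as stated: the $\lambda_j$'s arise only from eigenvalues $a_j$ of $K$ with $a_j>-1/4$, so ``$K$ has a nonzero eigenvalue'' does not by itself produce a $\lambda_j\neq2$; you would also need to exclude the possibility that the only nonzero eigenvalue of $K$ is $-1/4$ (note that $C_\sigma^2=\tfrac14+K\geq0$ forces $K\geq-\tfrac14$, with $-\tfrac14$ attained exactly on $\Ker(C_\sigma)$). Second, and more seriously, the non-vanishing of $\{\alpha\cdot N,C_\sigma\}$ on the sphere is never actually established: you defer it to an unexecuted computation with a constant spinor or to an appeal to the literature. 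The paper's argument is different and self-contained: it exhibits an explicit radial spinor $\varphi_\lambda$ built from the functions $f_\lambda(r)$, shows that $(H+V_\lambda)(\varphi_\lambda)=0$ when $\lambda$ solves the quadratic (\ref{p2eq1}), checks that this quadratic has a real root $\lambda\neq\pm2$, and verifies $\varphi_\lambda\in D(T)$ by proving $\varphi_\lambda=\Phi(g_\lambda)$ with $(1+\lambda C_\sigma)(g_\lambda)=0$ via the reproducing formula (\ref{l5eq5}); Theorem \ref{t2}$(i)$ then forces $|\lambda|=\lambda_j$ for some $j$, and this $\lambda_j\neq2$. To complete your part $(ii)$ you would need either to carry out such an explicit construction, or to actually perform the spherical computation you only sketch and, in addition, to show that it yields an eigenvalue $a_j$ lying in $(-1/4,0)\cup(0,\infty)$.
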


\begin{proof}
We keep the notation used in the proof Theorem \ref{t2}. Concerning $(i)(a)$, we can not apply Theorem \ref{t2} directly because $\Sigma$ is unbounded (and hence $K$ might lose its compacity). However, recall that $K=C_\sigma(\alpha\cdot N)\{\alpha\cdot N,C_\sigma\}$ and that the kernel of $\{\alpha\cdot N,C_\sigma\}$ is given by (\ref{l8eq1}). Since $\Sigma=\R^2\times\{0\}$, then $N$ is constant, so $N(z)-N(x)=0$ for all $x,z\in\Sigma$ and, similarly, $N(x)\cdot(x-z)=0$. This implies by (\ref{l8eq1}) that the kernel defining $\{\alpha\cdot N,C_\sigma\}$ is identically zero, so $K=0$ and $\Lambda_+\Lambda_-=1/4-1/\lambda^2$. Therefore, $\Lambda=-\Lambda_+$ is invertible for all $\lambda\neq\pm2$, so $\Ran(\Lambda)$ is closed and $\Ker(\Lambda)=0$. Then, Theorem \ref{t1}$(iii)$ in combination with (\ref{t2eq2}) and (\ref{t2eq1}) shows that $T$ si self-adjoint for all $\lambda\neq\pm2$ (Remark \ref{r1} justifies the use of (\ref{t2eq1})). The second statement of $(i)(a)$ follows essentially as Theorem \ref{t2}$(i)$, we leave the details for the reader.

In order to prove the first statement of $(i)(b)$, assume for example that $\lambda=2$. Then, we have $\Lambda_++\Lambda_-=I_4$,
$\Lambda_+\Lambda_-=\Lambda_-\Lambda_+=0$ and, as a consequence, $\Lambda_+^2=\Lambda_+$ and $\Lambda_-^2=\Lambda_-$. Thus $\Lambda_\pm$ are self-adjoint projections in $L^2(\sigma)^4$, and hence they have closed range by \cite[Theorem 12.14$(c)$]{Rudin}. Therefore, $\Ran(\Lambda)$ is closed for $\lambda=2$ and Theorem \ref{t1}$(ii)$ applies, showing that $T$ is essentially self-adjoint. The case $\lambda=-2$ follows by similar arguments.

We are going to prove the second statement of $(i)(b)$, so assume that $\lambda=2$ (the case $\lambda=-2$ is similar). We take $N(x)=(0,0,-1)$ for all $x\in\Sigma$, that is, $\Omega_+=\R^2\times(0,\infty)$. 
Notice that $S=\alpha_3(\alpha_1\partial_{x_1}+\alpha_2\partial_{x_2}
+im\beta)$ only acts on the coordinates $x_1$ and $x_2$.
Using the Fourier transform on the $x_1$ and $x_2$ variables together with the anticommutation properties of $\beta$ and the $\alpha_j$'s, it is easy to show that $S$ is a self-adjoint operator on $W^{1,2}(dx_1dx_2)$, so  its eigenvalues are real. Let $P_\pm$ be the positive/negative projection operators associated to $S$, i.e., given a function $f(x_1,x_2)$ decomposed in terms of the eigenvectors of $S$, $P_+(f)$ corresponds to the part of the decomposition of $f$ relative to the eigenfunctions with positive eigenvalue, and $P_-(f)$ corresponds to the negative ones. In particular, $SP_+\geq0$ and $SP_-\leq0$. Define the positive operator 
$|S|=SP_+-SP_-$ and let $|S|^{-1/2}$ be the positive square root of the inverse of $|S|$, which exists because of the invertibility and positivity of $|S|$ (use the Fourier transform). 

For any given function $\varphi$, $H(\varphi)=0$ in $\R^2\setminus\Sigma$ is equivalent to $\partial_{x_3}\varphi=-S(\varphi)$ for all $x_3\neq0$. It is an exercise to show that $\partial_{x_3}\varphi=-S(\varphi)$ if and only if 
$\partial_{x_3}\big(|S|^{-1/2}(\varphi)\big)=-S|S|^{-1/2}(\varphi)$.
Let $\varphi=\Phi(h)\in\{\Phi(g):\,g\in\Ker(\Lambda)\}$, and set 
$\psi=|S|^{-1/2}(\varphi)$. Then, since $H(\varphi)=0$ in $\R^2\setminus\Sigma$, by the previous comments we have 
\begin{equation}\label{p2eq1b}
\partial_{x_3}\psi=-S(\psi)\quad\text{for all }x_3\neq0.
\end{equation}
Moreover, it is not hard to show that $\psi$ has non-tangential boundary values at $\Sigma$ from $\Omega_\pm$ and, actually, $\psi_\pm=|S|^{-1/2}\varphi_\pm$.
By standard arguments, this implies that
\begin{equation}\label{p2eq4}
\psi(x_1,x_2,x_3)=\left\{
\begin{split}
&e^{-x_3S}\psi_+(x_1,x_2)\quad\text{for all }x_3>0,\\
&e^{x_3S}\psi_-(x_1,x_2)\quad\text{for all }x_3<0,
\end{split}
\right.
\end{equation}
and $P_\pm(\psi_\mp)=0$. If we multiply (\ref{p2eq1b}) by $2\overline\psi$ and we take real parts, we obtain
$\partial_{x_3}(|\psi|^2)=-2\Re\left(S(\psi)\cdot\overline\varphi\right)$, and then integrating in $\Omega_\pm$ and using (\ref{p2eq4}), we deduce
\begin{equation}\label{p2eq5}
\begin{split}
\pm\int_{\Sigma}|\psi_\pm|^2\,d\sigma
=2\int_{\Omega_\pm}\Re\Big(S(\psi)\cdot\overline\psi\Big)\,d\mu,
=\pm2\int_{\Omega_\pm}|S|(\psi)\cdot\overline\psi\,d\mu.
\end{split}
\end{equation}

Recall that $h\in\Ker(\Lambda)$, so $\Lambda(h)=-(1/2+C_\sigma)(h)=0$. Hence
$\varphi_\pm=\frac{1}{2}(\pm i\alpha_3-I_4)h$ by Lemma \ref{l5}$(i)$, and so 
\begin{equation}\label{p2eq3}
\varphi_++\varphi_-=-h\quad\text{and}
\quad\psi_++\psi_-=-|S|^{-1/2}(h).
\end{equation}
Notice that, since $P_\pm(\psi_\mp)=0$ and $P_\pm$ are complementary projections, $\psi_+$ and $\psi_-$ are orthogonal and thus 
$\|\psi_++\psi_-\|^2_\sigma
=\|\psi_+\|^2_\sigma+\|\psi_-\|^2_\sigma$. Therefore, by (\ref{p2eq3}) and (\ref{p2eq5}), 
\begin{equation}\label{p2eq2}
\begin{split}
\langle|S|^{-1}(h),h\rangle_\sigma
&=\|\psi_++\psi_-\|^2_\sigma
=\|\psi_+\|^2_\sigma+\|\psi_-\|^2_\sigma
=2\langle|S|(\psi),\psi\rangle_\mu
=2\|\varphi\|^2_\mu,
\end{split}
\end{equation}
since we have set $\psi=|S|^{-1/2}(\varphi)$.
Therefore, looking at (\ref{p2eq2}), we deduce that the closure in $L^2(\mu)^4$ of $\{\Phi(h):\,h\in\Ker(\Lambda)\}$, which we denote by $D'$, corresponds to the image by $\Phi$ of the completion of $\ker(\Lambda)\subset L^2(\sigma)^4$ with respect to the norm given by the left hand side of $(\ref{p2eq2})$. This completion of $\ker(\Lambda)$ is not contained in $L^2(\sigma)^4$ because, roughly speaking, $\ker(\Lambda)$ is big enough. Indeed, on the Fourier side, it is not hard to show that
$$\FF(\Lambda)(\xi_1,\xi_2)=-\frac{1}{2}-\FF(C_\sigma)(\xi_1,\xi_2)
=-\frac{1}{2}\left(1+\frac{2\pi(\xi_1\alpha_1+\xi_2\alpha_2)+m\beta}{(4\pi^2(\xi_1^2+\xi_2^2)+m^2)^{1/2}}\right)$$
and $(2\pi(\xi_1\alpha_1+\xi_2\alpha_2)+m\beta)^2
=4\pi^2(\xi_1^2+\xi_2^2)+m^2$, so the only eigenvalues of $\FF(\Lambda)(\xi_1,\xi_2)$ are $0$ and $-1$, and the corresponding spaces of eigenvectors with a fixed eigenvalue have the same dimension.
As a conclusion, $\{\Phi(h):\,h\in\Ker(\Lambda)\}\subsetneq D'$, and the second statement of $(i)(b)$ follows by Theorem \ref{t1}$(ii)$. The last statement of $(i)(b)$ follows essentially as Theorem \ref{t2}$(ii)$, we leave the details for the reader. This finishes the proof of $(i)(b)$.

In what respects to $(ii)$, assume that $\Sigma=\{x\in\R^3:\, |x|=1\}$ and we define 
\begin{equation*}
f_\lambda(r)=\left\{
\begin{split}
&(\lambda (1+m)-2m)\frac{e^{mr}-e^{-mr}}{mr}\quad\text{for }r<1,\\
&\big(\lambda(e^{2m}(m-1)+1+m)-2m(e^{2m}-1)\big)\frac{e^{-mr}}{mr}\quad\text{for }r>1,
\end{split}
\right.
\end{equation*}
which is real analytic for $r\neq1$ (even around $r=0$). Given $x\in\R^3$ we set $|x|=r$ and,
for $r\neq1$, we take
$$\varphi_\lambda(x)=\frac{-i}{m|x|}\left(im|x|f_\lambda(r),0,
x_3f'_\lambda(r),(x_1+ix_2)f'_\lambda(r)
\right)^t,$$
which belongs to $L^2(\mu)^4$.
A computation shows that, if $\lambda$ satisfies 
\begin{equation}\label{p2eq1}
\begin{split}
m^2\lambda^2+2\big((2m^2+2m+1)e^{-2m}-1\big)\lambda
-4m^2=0
\end{split}
\end{equation}
then $(H+V_\lambda)(\varphi_\lambda)=0$ distributionally, where $V_\lambda(\varphi_\lambda)
=\frac{\lambda}{2}((\varphi_\lambda)_++(\varphi_\lambda)_-)\sigma$ and $(\varphi_\lambda)_\pm$ denote the boundary values of $\varphi_\lambda$ when we approach non-tangentially to $\Sigma$ from inside/outside the ball $\Omega_+=\{x\in\R^3:\,|x|<1\}$ (we have chosen $N(x)=x/|x|$).
It is not hard to show that there exists some real  $\lambda\neq\pm 2$ satisfying (\ref{p2eq1}). For this $\lambda$, if we prove that the corresponding $\varphi_\lambda$ belongs to the domain $D(T)$ of Theorem
\ref{t2}$(i)$, then $2\neq|\lambda|=\lambda_j$ for some $j\in J$, thus there must exist some $\lambda_j\neq2$. Furthermore, by the definition of $\{\lambda_j\}_{j\in J}$ (see the proof of Theorem \ref{t2}$(i)$), if there is some $\lambda_j\neq2$ then $0\neq1/\lambda_j^2-1/4=a_j$ is an eigenvalue of $K$, thus $K$ is not identically zero, but since 
$K=C_\sigma(\alpha\cdot N)\{\alpha\cdot N,C_\sigma\}$ and $C_\sigma(\alpha\cdot N)$ is invertible by Lemma \ref{l5}$(ii)$, then $\{\alpha\cdot N,C_\sigma\}$ must not be identically zero. 

It only remains to check that $\varphi_\lambda\in D(T)$, where $D(T)$ is given by Theorem \ref{t2}. Using that $\varphi_\lambda$ decays exponentially at infinity and that $H(\varphi_\lambda)=0$ in $\Sigma^c$, one can verify that (\ref{l5eq5}) can be applied to $\varphi_\lambda$ either in $\Omega_+$ or $\Omega_-=\{x\in\R^3:\,|x|>1\}$. Therefore, using (\ref{l5eq5}) and Lemma \ref{l5}$(i)$, we have
$(\varphi_\lambda)_\pm=(1/2\pm iC_\sigma(\alpha\cdot N))(\varphi_\lambda)_\pm$, which implies that 
\begin{equation}\label{p2eq6}
\begin{split}
(\varphi_\lambda)_\pm=\pm2iC_\sigma(\alpha\cdot N)(\varphi_\lambda)_\pm.
\end{split}
\end{equation}
Set 
$g_\lambda=i(\alpha\cdot N)((\varphi_\lambda)_+-(\varphi_\lambda)_-)\in L^2(\sigma)^4$ and $\psi_\lambda=\Phi(g_\lambda)$. Then, from Lemma \ref{l5}$(i)$ and (\ref{p2eq6}), we deduce
\begin{equation*}
\begin{split}
(\psi_\lambda)_\pm
&=\Big(\mp\frac{i}{2}\,(\alpha\cdot N)+C_\sigma\Big)\big(i(\alpha\cdot N)((\varphi_\lambda)_+-(\varphi_\lambda)_-)\big)\\
&=\pm\frac{1}{2}\,(\varphi_\lambda)_+\mp\frac{1}{2}\,(\varphi_\lambda)_-
+iC_\sigma(\alpha\cdot N)(\varphi_\lambda)_+
-iC_\sigma(\alpha\cdot N)(\varphi_\lambda)_-
=(\varphi_\lambda)_\pm.
\end{split}
\end{equation*}
Hence, $\varphi_\lambda$ and $\psi_\lambda$ are two functions with the same boundary values on $\Sigma$ when we approach from $\Omega_\pm$ and they satisfy $H(\varphi_\lambda)=H(\psi_\lambda)=0$ so, by a uniqueness theorem in $\Omega_\pm$, we have $\varphi_\lambda=\psi_\lambda=\Phi(g_\lambda)$ in $L^2(\mu)^4$. Moreover, since $(H+V_\lambda)(\varphi_\lambda)=0$ distributionally, then
\begin{equation*}
\begin{split}
g_\lambda
&=-\frac{\lambda}{2}((\varphi_\lambda)_++(\varphi_\lambda)_-)
=-\lambda C_\sigma(g_\lambda),
\end{split}
\end{equation*}
which means that $(1+\lambda C_\sigma)(g_\lambda)=0$. Therefore, $\varphi_\lambda=\Phi(g_\lambda)\in D(T)$, and the proposition is finally proved.
\end{proof}

\begin{remark}
By using translations, rotations, and dilations, one can show that similar results hold for general planes and spheres in $\R^3$.
\end{remark}

\begin{teo}\label{t4}
Assume that $\Sigma$ is Lipschitz. Let $c\in\C$ and $\omega: L^2(\sigma)^4\to L^2(\sigma)^4$ be a bounded operator such that 
\begin{itemize}
\item[$(i)$] the commutator $[\omega,C_\sigma(\alpha\cdot N)]=\omega C_\sigma(\alpha\cdot N)-C_\sigma(\alpha\cdot N)\omega$ vanishes,
\item[$(ii)$] $\tau=I_4+i(1-2c)\omega+c(1-c)\omega^2$
is invertible in $L^2(\sigma)^4$,
\item[$(iii)$] $\Lambda=-(\alpha\cdot N)\tau^{-1}
\big(\omega+i(1/2-c)\omega^2-C_\sigma(\alpha\cdot N)\omega^2\big)$ is self-adjoint.
\end{itemize}
Set
$$D(T)=\big\{u+\Phi(g): u\in W^{1,2}(\mu)^4,\,g\in L^2(\sigma)^4,\,\Lambda(\Tr_\sigma(u))=g\big\}$$
and $T=H+V_\omega$ on $D(T)$, where 
$$V_\omega(\varphi)=(\alpha\cdot N)\omega(c\varphi_++(1-c)\varphi_-)\sigma$$ and 
$\varphi_\pm=\Tr_\sigma(u)+C_\pm (g)$.
Then $T:D(T)\subset L^2(\mu)^4\to L^2(\mu)^4$ is self-adjoint.
\end{teo}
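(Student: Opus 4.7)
The plan is to deduce the theorem from Theorem \ref{t1}(i), applied with $L=H$, $b=4$, and the prescribed $\Lambda$. Using Remark \ref{r2} to rewrite $u\in W^{1,2}(\mu)^4$ as $u=\Phi(G)$ with $G=H(u)\in L^2(\mu)^4$, one sees that $D(T)$ coincides with the domain $\{\Phi(G+g):G\mu+g\sigma\in\XX,\,\Lambda(\Phi_\sigma(G))=g\}$ appearing in Theorem \ref{t1}(i). I would first check that $\Lambda$ is a bounded self-adjoint operator on $L^2(\sigma)^4$: boundedness follows from boundedness of $C_\sigma$ (Lemma \ref{l5}), of multiplication by $\alpha\cdot N$, of $\omega$ by hypothesis, and of $\tau^{-1}$ (assumption $(ii)$ combined with the open mapping theorem); self-adjointness is precisely hypothesis $(iii)$.

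The crucial step is then to verify that $V_\omega$ on $D(T)$ agrees with the abstract potential $V$ of Corollary \ref{c3}, i.e.\ that $V_\omega(\varphi)=-g\sigma$ for every $\varphi=\Phi(G+g)\in D(T)$. Writing $A=\alpha\cdot N$, $P=C_\sigma A$, and $v=\Phi_\sigma(G)$, the Plemelj--Sokhotski formulae of Lemma \ref{l5}(i) give
\[
c\varphi_+ + (1-c)\varphi_- \;=\; v + C_\sigma(g) + \tfrac{i(1-2c)}{2}\,Ag,
\]
so that $V_\omega(\varphi)/\sigma = A\omega v + A\omega C_\sigma(g) + \tfrac{i(1-2c)}{2}A\omega A g$. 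Left-multiplying by $A$, using $A^2=I_4$, $C_\sigma = PA$, and the commutation hypothesis $\omega P=P\omega$, the desired identity $V_\omega(\varphi)/\sigma=-g$ becomes, after substituting $g=\Lambda v=-A\tau^{-1}Mv$ with $M=\omega+i(1/2-c)\omega^2-P\omega^2$, equivalent to the purely algebraic operator identity
\[
\bigl[I_4 + P\omega + \tfrac{i(1-2c)}{2}\omega\bigr]\,M \;=\; \tau\,\omega
\]
in the algebra generated by $I_4$, $\omega$ and $P$.

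I expect the verification of this identity to be the main technical point, but it is purely computational: expanding both sides and grouping terms of the form $\omega^k$ and $P\omega^k$, the assumption $\omega P=P\omega$ reduces everything to polynomials in $\omega$ and $P$, while Lemma \ref{l5}(ii), namely $P^2=-I_4/4$, kills the $P^2\omega^3$ contribution and produces the cubic term $\tfrac{1}{4}\omega^3$. The coefficients in $\omega$, $\omega^2$, $\omega^3$, $P\omega^2$, $P\omega^3$ then match through the elementary relations $2i(1/2-c)=i(1-2c)$ and $\tfrac14-\tfrac{(1-2c)^2}{4}=c(1-c)$, the latter recovering exactly the $\omega^2$-coefficient of $\tau$. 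Having established $V_\omega=V$ on $D(T)$, Theorem \ref{t1}(i) immediately produces self-adjointness of $T:D(T)\to L^2(\mu)^4$.
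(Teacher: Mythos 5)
Your proposal is correct and follows essentially the same route as the paper: both reduce to Theorem \ref{t1}$(i)$ via Remark \ref{r2} and verify that $V_\omega$ coincides with the abstract potential $V$ of Corollary \ref{c3} on $D(T)$, using the commutation hypothesis together with $(C_\sigma(\alpha\cdot N))^2=-I_4/4$ and the identity $\tfrac14-\tfrac{(1-2c)^2}{4}=c(1-c)$. The only (cosmetic) difference is directional: the paper solves $V_\omega(\varphi)=-g\sigma$ for $g$ by factoring $\tau=(\overline\omega-\omega C_\sigma(\alpha\cdot N))(\overline\omega+\omega C_\sigma(\alpha\cdot N))$ with $\overline\omega=I_4+i(1/2-c)\omega$, whereas you substitute $g=\Lambda(\Tr_\sigma u)$ and expand directly, which is the same computation run in reverse.
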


\begin{proof}
Recall that 
$\varphi_\pm=\Tr_\sigma(u)\mp\frac{i}{2}\,(\alpha\cdot N)g
+C_\sigma(g)$ by Lemma \ref{l5}$(i)$. As before, if we want $V_\omega$ to coincide with the potential $V$ introduced in Corollary \ref{c3} (in order to apply Theorem \ref{t1}$(i)$), then we must have 
\begin{equation}\label{t4eq3}
-g=(\alpha\cdot N)\omega\big(\Tr_\sigma(u)+i(1/2-c)(\alpha\cdot N)g+C_\sigma(g)\big),
\end{equation}
which yields
\begin{equation}\label{t4eq1}
\begin{split}
-(\omega\Tr_\sigma)(u)&=(\alpha\cdot N)g+\omega\big(i(1/2-c)(\alpha\cdot N)+C_\sigma\big)(g)\\
&=\big(I_4+i(1/2-c)\omega+\omega C_\sigma(\alpha\cdot N)\big)((\alpha\cdot N)g).
\end{split}
\end{equation}
To shorten notation, we denote $\overline\omega=I_4+i(1/2-c)\omega$. Since $\omega C_\sigma(\alpha\cdot N)=C_\sigma(\alpha\cdot N)\omega$ by $(i)$ and $(C_\sigma(\alpha\cdot N))^2=-1/4$ by Lemma \ref{l5}$(ii)$, we easily deduce that
\begin{equation}\label{t4eq2}
\begin{split}
\big(\overline\omega-\omega C_\sigma(\alpha\cdot N)\big)
\big(\overline\omega+\omega C_\sigma(\alpha\cdot N)\big)
=\overline\omega^2+\omega^2/4.
\end{split}
\end{equation}
Notice that $\overline\omega^2+\omega^2/4=\tau$, which is invertible by $(ii)$. If we apply 
$\overline\omega-\omega C_\sigma(\alpha\cdot N)$ on both sides of (\ref{t4eq1}) and we use (\ref{t4eq2}), we obtain
\begin{equation}\label{t4eq4}
\begin{split}
-(\alpha\cdot N)\tau^{-1}
\big(\overline\omega-\omega C_\sigma(\alpha\cdot N)\big)(\omega\Tr_\sigma)(u)
=g.
\end{split}
\end{equation}
Observe that $-(\alpha\cdot N)\tau^{-1}
\big(\overline\omega-\omega C_\sigma(\alpha\cdot N)\big)\omega=\Lambda$, which is self-adjoint by $(iii)$, and $(\ref{t4eq4})$ can be rewritten as $\Lambda(\Tr_\sigma(u))=g$. Therefore, if $u+\Phi(g)\in D(T)$, then $u$ and $g$ satisfy (\ref{t4eq3}) by the construction of $\Lambda$, and Theorem \ref{t1}$(i)$ and Remark \ref{r2} show that $T:D(T)\to L^2(\mu)^4$ is self-adjoint. 
\end{proof}

\begin{teo}\label{t3}
Assume that $\Sigma$ is Lipschitz. Given $c\in\C$, there exists $\epsilon>0$ depending only on $\sigma$, $m$, and $c$ such that, if $\omega: L^2(\sigma)^4\to L^2(\sigma)^4$ is a bounded operator with $\|\omega\|_{L^2(\sigma)^4\to L^2(\sigma)^4}<\epsilon$,
$$\tau=I_4+\omega(i(1/2-c)(\alpha\cdot N)+C_\sigma)$$ is invertible in $L^2(\sigma)^4$.
Moreover, if $\tau^{-1}\omega$ is self-adjoint and we set 
$$D(T)=\big\{u+\Phi(g): u\in W^{1,2}(\mu)^4,\,g\in L^2(\sigma)^4,\,(\tau^{-1}\omega\Tr_\sigma)(u)=-g\big\}$$
and $T=H+V_\omega$ on $D(T)$, where 
$$V_\omega(\varphi)=\omega(c\varphi_++(1-c)\varphi_-)\sigma.$$
Then $T:D(T)\subset L^2(\mu)^4\to L^2(\mu)^4$ is self-adjoint.
\end{teo}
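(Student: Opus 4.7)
The plan is to apply Theorem~\ref{t1}$(i)$ with $L=H$ and a suitable self-adjoint $\Lambda$, exactly as in the proof of Theorem~\ref{t4} but with a different algebraic setup dictated by the smallness condition on $\omega$ instead of the commutator hypothesis. The whole idea is to force the given potential $V_\omega$ to coincide, on the proposed domain $D(T)$, with the canonical potential $V(\varphi)=-g\sigma$ from Corollary~\ref{c3}; once this matching is achieved, Theorem~\ref{t1}$(i)$ immediately provides self-adjointness.

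First I would show that $\tau$ is invertible for $\|\omega\|$ small enough. By Lemma~\ref{l5}, $C_\sigma$ is a bounded operator on $L^2(\sigma)^4$ (its norm depending only on $\sigma$ and $m$, through $\phi$), and $\alpha\cdot N$ has operator norm $\leq 1$, so the quantity
\begin{equation*}
M=\bigl\|i(1/2-c)(\alpha\cdot N)+C_\sigma\bigr\|_{L^2(\sigma)^4\to L^2(\sigma)^4}
\end{equation*}
depends only on $\sigma$, $m$, and $c$. Setting $\epsilon = 1/M$ (or any smaller positive number), the hypothesis $\|\omega\|<\epsilon$ yields $\|\omega(i(1/2-c)(\alpha\cdot N)+C_\sigma)\|<1$, and the Neumann series gives a bounded inverse of $\tau=I_4+\omega(i(1/2-c)(\alpha\cdot N)+C_\sigma)$.

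Next I would identify the correct $\Lambda$. For $\varphi=u+\Phi(g)\in D(T)$ with $u=\Phi(G)\in W^{1,2}(\mu)^4$ (possible by Remark~\ref{r2}), Lemma~\ref{l5}$(i)$ gives $\varphi_\pm=\Tr_\sigma(u)\mp\tfrac{i}{2}(\alpha\cdot N)g+C_\sigma(g)$, and a direct combination yields
\begin{equation*}
c\varphi_++(1-c)\varphi_- = \Tr_\sigma(u)+i(1/2-c)(\alpha\cdot N)g+C_\sigma(g).
\end{equation*}
In order to invoke Corollary~\ref{c3} (so that $T=H+V_\omega$ lands in $L^2(\mu)^4$) one must have $V_\omega(\varphi)=-g\sigma$, that is,
\begin{equation*}
-g=\omega\Tr_\sigma(u)+\omega\bigl(i(1/2-c)(\alpha\cdot N)+C_\sigma\bigr)g=\omega\Tr_\sigma(u)+(\tau-I_4)g,
\end{equation*}
which rearranges to $\tau g=-\omega\Tr_\sigma(u)$, i.e. $\tau^{-1}\omega\Tr_\sigma(u)=-g$. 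This is precisely the defining constraint of $D(T)$ in the statement, so with $\Lambda:=-\tau^{-1}\omega$ one has $\Lambda(\Tr_\sigma(u))=g$ for every $\varphi\in D(T)$, and $V_\omega$ restricted to $D(T)$ coincides with $V$ from Corollary~\ref{c3}.

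Finally, since $\tau^{-1}\omega$ is self-adjoint by hypothesis, so is $\Lambda=-\tau^{-1}\omega$, and it is bounded because $\tau^{-1}$ and $\omega$ are. Theorem~\ref{t1}$(i)$ (combined with Remark~\ref{r2}, to rewrite the domain in the form $\{u+\Phi(g)\}$ used in the statement) then concludes that $T:D(T)\to L^2(\mu)^4$ is self-adjoint. There is no serious obstacle; the only point requiring a modicum of care is that the constant $\epsilon$ must genuinely be extracted before referring to $\omega$, which forces one to estimate $\|C_\sigma\|$ in terms of $\sigma$ and $m$ only (via Lemma~\ref{l5}), so that the smallness threshold is a function of $\sigma,m,c$ as claimed in the statement.
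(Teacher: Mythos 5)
Your proposal is correct and follows essentially the same route as the paper: a Neumann series argument for the invertibility of $\tau$ (with $\epsilon$ controlled by $\|C_\sigma\|$, which depends only on $\sigma$ and $m$), followed by the computation showing that the domain constraint $\tau^{-1}\omega\Tr_\sigma(u)=-g$ is equivalent to $V_\omega(\varphi)=-g\sigma$, so that Theorem \ref{t1}$(i)$ and Remark \ref{r2} apply with $\Lambda=-\tau^{-1}\omega$. The only cosmetic difference is that you derive the constraint from the matching condition while the paper verifies the matching from the constraint; these are the same computation.
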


\begin{proof}
That $\tau$ is invertible follows easily from a Neumann serie argument, since the operator norm of $\alpha\cdot N$ is $1$ (to see it, use that it is self-adjoint in $L^2(\sigma)^4$ and satisfies $(\alpha\cdot N)^2=I_4$) and the norm of $C_\sigma$ only depends on $\sigma$ and $m$. In particular, $\epsilon$ can be taken so that $$\epsilon\geq1/2+|c|+\|C_\sigma\|_{L^2(\sigma)^4\to L^2(\sigma)^4}.$$

Assume that $\tau^{-1}\omega$ is self-adjoint. If $\varphi=u+\Phi(g)\in D(T)$, then 
$\varphi_\pm=\Tr_\sigma(u)\mp\frac{i}{2}\,(\alpha\cdot N)g
+C_\sigma(g)$ by Lemma \ref{l5}$(i)$. Hence 
\begin{equation*}
\begin{split}
V_\omega(\varphi)&=\omega\big(c\varphi_++(1-c)\varphi_-\big)\sigma
=\omega\big(\Tr_\sigma(u)+i(1/2-c)(\alpha\cdot N)g+C_\sigma(g)\big)\sigma\\
&=\big( (\omega\Tr_\sigma)(u)+\tau(g)-g\big)\sigma
=\big( (\omega\Tr_\sigma)(u)-\tau(\tau^{-1}\omega\Tr_\sigma)(u)-g\big)\sigma
=-g\sigma,\\
\end{split}
\end{equation*}
thus $V_\omega$ coincides with the potential $V$ introduced in Corollary \ref{c3}. Therefore, Theorem \ref{t1}$(i)$ and Remark \ref{r2} apply with $\Lambda=-\tau^{-1}\omega$, proving that $T:D(T)\to L^2(\mu)^4$ is self-adjoint. 
\end{proof}

\begin{remark}\label{r1}
Similar results to Lemma \ref{l5} and Theorems \ref{t4} and \ref{t3} hold when $\Sigma=\{(x_1,x_2,x_3)\in\R^3:\,x_3=A(x_1,x_2)\}$ is the graph of a Lipschitz function $A:\R^{2}\to\R$ (see \cite{McIntosh} for the case of Lemma \ref{l5}). We omit the details.
\end{remark}

\subsection{Some examples}\label{ss6}
We give some particular examples of potentials that fit in the last two theorems. Concerning Theorem \ref{t4}, we consider the following ones:
\begin{itemize}
\item[$(i)$] Take $\lambda\in\R$ and $\omega=\lambda I_4$, that is 
$$V_\omega(\varphi)=\lambda(\alpha\cdot N)(c\varphi_++(1-c)\varphi_-).$$ In this case, for $c=1/2$, $\tau= -\lambda^2/4-1$ is invertible for all $\lambda\in\R$, and then
\begin{equation*}
\Lambda=4\lambda(\lambda^2+4)^{-1}\big(\lambda(\alpha\cdot N)C_{\sigma}-1\big)(\alpha\cdot N)
\end{equation*}
is self-adjoint. 
\item[$(ii)$]  Set $\omega=rI_4+sC_\sigma(\alpha\cdot N)$ with $r,s\in \R$. The commutator $[\omega,C_\sigma(\alpha\cdot N)]$ vanishes and $\tau$ can be written as $pI_4+qC_\sigma(\alpha\cdot N)$, where $p=(2c-1)ir + c(c-1)\left(r^2+s^2/4\right)-1$ and $q=(2c-1)is + 2rsc(c-1)$. Notice that 
\begin{equation*}
(pI_4-qC_\sigma(\alpha\cdot N))(pI_4+qC_\sigma(\alpha\cdot N))=p^2-q^2/4. 
\end{equation*}
Hence, $\tau$ is invertible if $p^2\neq q^2/4$. It is easy to see that, for $c=1/2$, $p^2\neq q^2/4$ holds for all $r,s\in \R$. Therefore,  in this case, $[\omega,C_\sigma(\alpha\cdot N)]=0$ and $\tau$ is invertible. It is straightforward to check that then $\Lambda$ is self-adjoint in $L^2(\sigma)^4$.
\end{itemize}
In what respects to Theorem \ref{t3}, we consider the following potentials:
\begin{itemize}
\item[$(iii)$]  If we take $\omega=\lambda I_4$ with $\lambda\in \R$ small enough, that is 
$$V_\omega(\varphi)=\lambda(c\varphi_++(1-c)\varphi_-),$$
then $\tau=I_4+\lambda(i(1/2-c)(\alpha\cdot N)+C_\sigma)$ with $\Re{(c)}=1/2$ is invertible and self-adjoint, thus $\lambda\tau^{-1}$ is also self-adjoint.
\item[$(iv)$] By similar arguments it can be seen that, for 
$\omega=\delta\big(i  (1/2-c)(\alpha\cdot N)+C_\sigma\big)$ with $\delta\in \R$ small enough and $\Re{(c)}=1/2$, $\tau$ is self-adjoint and invertible, thus $\tau^{-1}\omega$ is self-adjoint.
\item[$(v)$] It is easy to see that any linear combination of the previous operators, say $\lambda I_4+\delta\big(i (1/2-c)(\alpha\cdot N)+C_\sigma\big)$, satisfy the assumptions of the theorem for $\lambda$ and $\delta$ small enough and $\Re{(c)}=1/2$.
\end{itemize}

\begin{remark}
Note the different nature of Theorems \ref{t4} and \ref{t3}, since the first one is based on a commutativity property and the second one on a smallness assumption. For example, for the potential $V_\omega(\varphi)=\lambda(c\varphi_++(1-c)\varphi_-)\sigma$, Theorem \ref{t4} can not be used because in this case $\omega=\lambda (\alpha\cdot N)$, which does not satisfy the assumption $(i)$ of the theorem. Indeed, for $\Sigma=\R^2\times\{0\}$, $\omega$ anticommutes with $C_{\sigma}(\alpha\cdot N)$.
\end{remark}

\end{document}